\newcommand{\groupsupport}[2]{The {#1} author was supported by {#2}.}
\newcommand{\NSFThree}{NSF Grant DMS-1509652}
\DeclareMathOperator{\conn}{conn}
\newcommand{\m}[1]{{\protect\underline{#1}}}
\newcommand{\mM}{\m{M}}
\newcommand{\mnu}{\m{\nu}}
\newcommand{\Sp}{\mathcal Sp}
\newcommand{\Ninfty}{N_\infty}
\newcommand{\cOrb}{\mathcal Orb}
\mathchardef\mhyphen=45
\numberwithin{equation}{section}
\newtheorem{theorem}{Theorem}[section]
\newtheorem{lemma}[theorem]{Lemma}
\newtheorem{corollary}[theorem]{Corollary}
\newtheorem{proposition}[theorem]{Proposition}
\newtheorem*{theorem*}{Theorem}
\newtheorem*{proposition*}{Proposition}
\newtheorem{MainTheorems}{Theorem}
\theoremstyle{remark}
\newtheorem{remark}[theorem]{Remark}
\theoremstyle{definition}
\newtheorem{definition}[theorem]{Definition}
\newcommand{\defemph}[1]{\textbf{#1}}
\title[New form of slices]{A new formulation of the equivariant slice filtration with applications to $C_p$-slices}
\author{Michael A.~Hill}
\address{University of California Los Angeles\\Los Angeles, CA 90025}
\email{mikehill@math.ucla.edu}
\thanks{\groupsupport{1st}{\NSFThree}}
\author{Carolyn Yarnall}
\address{California State University Dominguez Hills\\Carson, CA 90747}
\email{cyarnall@csudh.edu}
\begin{document}

\begin{abstract}
This paper provides a new way to understand the equivariant slice filtration. We give a new, readily checked condition for determining when a $G$-spectrum is slice $n$-connective. In particular, we show that a $G$-spectrum is slice greater than or equal to $n$ if and only if for all subgroups $H$, the $H$-geometric fixed points are $(n/|H|-1)$-connected. We use this to determine when smashing with a virtual representation sphere $S^V$ induces an equivalence between various slice categories. Using this, we give an explicit formula for the slices for an arbitrary $C_p$-spectrum and show how a very small number of functors determine all of the slices for $C_{p^n}$-spectra.
\end{abstract}

\keywords{equivariant stable homotopy theory, slice filtration, Mackey functor}

\subjclass[2010]{55N91, 55P91, 55Q10}
\maketitle

\section{Introduction}
The slice filtration in equivariant stable homotopy theory was introduced in the solution to the Kervaire invariant one problem \cite{HHR}. This filtration on equivariant spectra generalizes work of Dugger for $C_2$ and is analogous to Voevodsky's slice filtration in motivic homotopy (from whence the name) \cite{Dugger}, \cite{Voe:Open}. 

There is no single slice filtration for a finite group $G$. Given any sequence of collections of representation spheres, each of which is included in the next, we have an associated slice filtration. In particular, in this framework sits both the classical Postnikov filtration (for which we take for the $n$th collection all spheres of the form $S^k$ for $k$ at least $n$), the classical slice filtration in \cite{HHR}, and the regular slice filtration introduced by Ullman \cite{UllmanThesis}. Additionally, we have other versions interpolating between the Postnikov filtration and the classical or regular slice filtrations wherein we include certain regular representations. These latter filtrations are useful when analyzing the topological Andr\'e-Quillen homology of algebras over an arbitrary $\Ninfty$ operad \cite{eTAQ}, \cite{BHNinfty}. In this paper, however, we will restrict attention to the regular slice filtration. 

\begin{definition}
Let $\tau_{\geq n}$ be the localizing subcategory of genuine $G$-spectra generated by all spectra of the form
\[
G_+\wedge_H S^{k\rho_H},
\]
where $\rho_H$ is the regular representation of $H$ and where $k|H|\geq n$.

We say that a $G$-spectrum $E$ is {\defemph{slice less than $n$}} or {\defemph{slice $n$-coconnective}} if for all $H\subset G$ and $k$ such that $k|H|\geq n$, we have for all $r\geq 0$
\[
[S^{k\rho_H+r},E]^H=0.
\]

We say that a $G$-spectrum $E$ is {\defemph{slice greater than or equal to $n$}} or {\defemph{slice $n$-connective}} if $E\in\tau_{\geq n}$. 
\end{definition}

Since we will consider slice categories not only for $G$ but also for subgroups of $G$, when there is potential ambiguity, we will include superscripts to indicate the group as in $\tau_{\geq n}^G$.

In practice, it is often easier to show that something is slice $n$-coconnective, since this is a computation in homotopy groups. To show that something is slice $n$-connective, in contrast, we must either explicitly show that it is in the localizing subcategory or show that all maps from it to something slice $n$-coconnective are null. The first goal of this paper is to give a new condition for being slice $n$-connective that is similarly computable. The following is proved in Section~\ref{sec:Slices}.

\begin{MainTheorems}
A $G$-spectrum $E$ is slice $n$-connective if and only if for all subgroups $H\subset G$, the geometric fixed points $\Phi^H(E)$ are in the localizing subcategory of ordinary spectra generated by $S^{\lceil n/|H|\rceil}$.
\end{MainTheorems}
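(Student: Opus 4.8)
The plan is to prove both implications by reducing to the case of a single orbit and using the interaction between geometric fixed points and the regular representation spheres that generate $\tau_{\geq n}$. The key computational input is that $\Phi^H(G_+\wedge_K S^{k\rho_K})$ splits, via the double-coset formula, as a wedge of spheres whose dimensions are controlled by $k|K|$ and the orders of the relevant subgroups; in particular $\Phi^H(S^{k\rho_K}) \simeq S^{k|K^H|}$ when $H\subset K$ (where $K^H$ is the fixed points, really here $H$ itself up to conjugacy) and the whole smash is built from ordinary spheres of dimension $\geq \lceil n/|H|\rceil$ whenever $k|K|\geq n$. This shows $\Phi^H$ carries the generators of $\tau^G_{\geq n}$ into the localizing subcategory $\ccL_{n/|H|}$ of spectra generated by $S^{\lceil n/|H|\rceil}$, and since $\Phi^H$ preserves homotopy colimits, cofiber sequences, and the relevant retracts, it carries all of $\tau^G_{\geq n}$ into $\ccL_{n/|H|}$. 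That gives the "only if" direction.

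For the "if" direction I would argue contrapositively. Suppose $E\notin\tau^G_{\geq n}$; I want to produce a subgroup $H$ with $\Phi^H(E)\notin\ccL_{n/|H|}$, i.e.\ with some homotopy group of $\Phi^H(E)$ in degree $<\lceil n/|H|\rceil$ nonzero. By the definition of the localizing subcategory and the standard slice machinery from \cite{HHR}, $E\notin\tau^G_{\geq n}$ means the slice co-connective truncation $P^{n-1}E$ is nonzero, equivalently there is a subgroup $K$ and an integer $k$ with $k|K|\geq n$ and $r\geq 0$ such that $[S^{k\rho_K+r},E]^K\neq 0$ — no wait, more carefully: $E$ being slice $n$-connective is equivalent to $P^{n-1}E\simeq *$, and if $E$ is \emph{not} $n$-connective then $P^{n-1}E$ is a nonzero slice-$(<n)$ spectrum receiving a map from $E$ which is nonzero on some homotopy Mackey functor. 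I would instead work directly with geometric fixed points: the family of functors $\{\Phi^H\}_{H\subset G}$ is jointly conservative on $G$-spectra, so it suffices to show that the condition "$\Phi^H(E)\in\ccL_{n/|H|}$ for all $H$" forces $E\in\tau^G_{\geq n}$, which I would do by induction on $|G|$ together with an isotropy separation / cofiber sequence argument: writing the cofiber sequence $(E_{\mathcal P}G)_+\wedge E \to E \to \widetilde{E\mathcal P}G \wedge E$ relating $E$ to its $\Phi^G$ and to the $G/G'$-spectra with $G'\subsetneq G$, use that $\widetilde{E\mathcal P}G\wedge E$ is built from $\Phi^G(E)$ inflated up (which lies in $\tau^G_{\geq n}$ precisely when $\Phi^G(E)\in\ccL_{n/|G|}$, by the inflation/geometric-fixed-point compatibility of the slice filtration proved in \cite{HHR}) and that the other term is induced from proper subquotients, where the inductive hypothesis applies after checking that geometric fixed points of the restricted/induced pieces are controlled by the $\Phi^H(E)$.

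The main obstacle I anticipate is the bookkeeping in the inductive step: one must verify that the slice $n$-connectivity of an induced spectrum $G_+\wedge_{G'} X$ is detected by the slice connectivity of $X$ at the appropriately scaled level, and that geometric fixed points of such induced spectra are again wedges of geometric fixed points of $X$ over double cosets — this is where the ceiling function $\lceil n/|H|\rceil$ must be shown to behave correctly under the index scaling (the point being $\lceil n/|H|\rceil = \lceil (n/|G'|)/(|H|/|G'|)\rceil$ type identities for $H\subset G'$). I would isolate this as a lemma: for $H\subset G$, restriction and induction carry the $\tau_{\geq n}$ categories to the appropriately indexed ones, and $\Phi^H$ commutes with these up to the double-coset splitting; granting that lemma, both directions follow by the conservativity of geometric fixed points and induction on the order of the group, with the base case $G$ trivial being the tautology that $\tau^{e}_{\geq n} = \ccL_n$.
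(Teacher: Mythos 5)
Your proposal is correct and follows essentially the same route as the paper: the ``if'' direction via induction on $|G|$ and the isotropy separation sequence $E\mathcal P_+\wedge E\to E\to\tilde E\mathcal P\wedge E$, using that $\tilde E\mathcal P\wedge E$ is geometric (so its slice tower is a reindexed Postnikov tower, controlled by $\Phi^G E$) and that $E\mathcal P_+\wedge E$ is built from restrictions to proper subgroups. Your ``only if'' direction, checking $\Phi^H$ directly on the generators $G_+\wedge_K S^{k\rho_K}$ via the double coset formula, is a minor and valid variant of the paper's argument (which instead runs the isotropy separation sequence in the forward direction as well); the remaining worries you raise about index rescaling are non-issues since restriction preserves $\tau_{\geq n}$ with the same $n$.
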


This theorem gives an immediate and readily checkable way to see when smashing with a representation sphere $S^V$ induces an equivalence between various slice categories, and this is explored in Section~\ref{sec:RepSpheres}. This generalizes the original observation from \cite[Corollary 4.24]{HHR} that smashing with the regular representation for $G$ induces an equivalence
\[
\Sigma^{\rho_G}\colon\tau_{\geq n}\xrightarrow{\cong}\tau_{\geq (n+|G|)}.
\]
The existence of more general forms of this equivalence is the original motivation for this work. Barwick asked the authors if there was such a formula, and producing it for $C_p$ led to the more general results here. We can use these equivalences to simplify the slice tower for cyclic $p$-groups, showing the following in  Section~\ref{sec:SlicesforCyclic}.

\begin{MainTheorems}
When $G=C_{p^k}$ with $p>2$, then suspension by various representations divides the slice connective categories $\tau_{\geq n}$ into $2^k$ equivalence classes.
\end{MainTheorems}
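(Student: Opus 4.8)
The plan is to use the connectivity criterion of the theorem above to reduce the statement to a short computation in $\Z/2$-linear algebra and base-$p$ arithmetic. Throughout write the subgroups of $G=\Cp{k}$ as $\Cp{j}$ for $0\le j\le k$, so $|\Cp{j}|=p^j$. The theorem above, together with the monoidality of geometric fixed points and the identity $\Phi^{\Cp{j}}(S^V)=S^{\dim V^{\Cp{j}}}$, shows (this is the content of Section~\ref{sec:RepSpheres}) that for a virtual real representation $V$ of $G$, smashing with $S^V$ restricts to an equivalence
\[
\Sigma^V\colon\tau_{\geq n}\xrightarrow{\cong}\tau_{\geq m}
\quad\Longleftrightarrow\quad
\dim V^{\Cp{j}}=\lceil m/p^j\rceil-\lceil n/p^j\rceil\ \text{ for all }0\le j\le k.
\]
Since distinct integers $n$ give distinct categories $\tau_{\geq n}$ (apply the theorem with $H=\{e\}$), and since declaring $n\sim m$ when such a $V$ exists is an equivalence relation on $\Z$ (symmetry via $\Sigma^{-V}$, transitivity by composing), the number of equivalence classes of the $\tau_{\geq n}$ equals $|\Z/{\sim}|$. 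By the displayed criterion, the class of $n$ is detected by which vectors $\bigl(\lceil m/p^j\rceil-\lceil n/p^j\rceil\bigr)_{0\le j\le k}$ lie in the subgroup $L\subseteq\Z^{k+1}$ of dimension functions $\bigl(\dim V^{\Cp{j}}\bigr)_{j}$ of virtual representations $V$ of $G$.

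The hypothesis $p>2$ enters in identifying $L$. Because $p$ is odd, every nontrivial irreducible real representation of $\Cp{k}$ is $2$-dimensional, and its dimension function has the form $e_t:=(2,\dots,2,0,\dots,0)$ with exactly $t$ leading $2$'s for some $1\le t\le k$; conversely every such $e_t$ is realized by a suitable rotation representation, while the trivial representation contributes $(1,\dots,1)$. Since $e_t-e_{t-1}$ is twice a coordinate vector, the subgroup generated by $(1,\dots,1)$ and $e_1,\dots,e_k$ is
\[
L=\{\,x\in\Z^{k+1}:x_0\equiv x_1\equiv\dots\equiv x_k\pmod 2\,\},
\]
of index $2^k$ in $\Z^{k+1}$. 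Hence $n\sim m$ if and only if $\lceil m/p^j\rceil-m\equiv\lceil n/p^j\rceil-n\pmod 2$ for $j=1,\dots,k$ (the $j=0$ coordinate is automatic), so the class of $n$ is exactly the value of
\[
\iota(n):=\bigl(\lceil n/p^j\rceil-n\bigr)_{j=1}^{k}\in(\Z/2)^k,
\]
and there are at most $2^k$ classes.

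It remains to show $\iota$ is surjective. Writing $n=\sum_{i\ge 0}a_ip^i$ in base $p$, using $\lceil m/p^j\rceil=\lfloor(m-1)/p^j\rfloor+1$ for integers $m$, and reducing modulo $2$ (so that $p^i\equiv 1$), one finds $\iota(n)_j\equiv\delta_j+a_0+\dots+a_{j-1}\pmod 2$, where $\delta_j=0$ if $a_0=\dots=a_{j-1}=0$ and $\delta_j=1$ otherwise. Given a target $(\epsilon_1,\dots,\epsilon_k)\in(\Z/2)^k$, choose $a_0\in\{1,2\}$ with $a_0\equiv 1+\epsilon_1$ and, for $2\le j\le k$, choose $a_{j-1}\in\{1,2\}$ with $a_{j-1}\equiv\epsilon_j-\epsilon_{j-1}$; this is possible since $p\ge 3$. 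With $a_i=0$ for $i\ge k$ and $n=\sum_ia_ip^i$, one has $a_0\ne 0$, so every $\delta_j=1$, and the partial sums telescope to $a_0+\dots+a_{j-1}\equiv 1+\epsilon_j$, giving $\iota(n)_j\equiv\delta_j+1+\epsilon_j=\epsilon_j$. Thus $\iota$ is onto and there are exactly $2^k$ equivalence classes.

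The step I expect to be the main obstacle is the identification of $L$: one must verify that the dimension functions of the irreducibles of $\Cp{k}$ generate precisely the index-$2^k$ sublattice of vectors of constant parity, and nothing smaller. This is exactly where $p$ odd is essential — for $p=2$ the sign representations contribute dimension functions with a single odd coordinate, enlarging $L$ and changing the count — so some care is needed to list the irreducibles and their fixed-point dimensions correctly. The base-$p$ bookkeeping behind the surjectivity of $\iota$ is then routine.
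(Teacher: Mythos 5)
Your argument is correct, but it reaches the conclusion by a genuinely different route than the paper. The paper proves this theorem constructively: it writes down explicit representations $V_j$ (Definition~\ref{def:Vj}) and verifies by the same kind of ceiling arithmetic that $\Sigma^{V_j}$ induces equivalences $\tau_{\geq n}\simeq\tau_{\geq n+2p^j}$ on suitable congruence classes of $n$ (Theorem~\ref{thm:CyclicpGroupEquivalences}); together with $\Sigma^{\rho}$ these connect every $\tau_{\geq n}$ to one of $2^k$ listed representatives. You instead repackage the criterion of Theorem~\ref{thm:Smashing} as a lattice problem: $n\sim m$ exactly when the vector $\bigl(\lceil m/p^j\rceil-\lceil n/p^j\rceil\bigr)_j$ lies in the image $L$ of the fixed-point-dimension homomorphism $RO(C_{p^k})\to\Z^{k+1}$, you identify $L$ (for $p$ odd) as the index-$2^k$ constant-parity sublattice, and you extract the complete invariant $\iota(n)\in(\Z/2)^k$ and check it is onto. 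Your computation of $\iota$, the realization of each $e_t$ by $\lambda(p^{t-1})$, and the index computation are all correct, and your set of representatives ($a_i\in\{1,2\}$ for $0\le i\le k-1$) hits the same $2^k$ classes as the paper's ($a_0\in\{1,2\}$, $a_i\in\{0,1\}$ for $i\ge 1$). What your route buys is the lower bound: the paper's corollary exhibits enough equivalences to show there are \emph{at most} $2^k$ classes but does not spell out why no further identifications by other virtual representation spheres can occur, whereas your invariant settles this. What the paper's route buys is the explicit representations $V_j$ realizing the equivalences, which are what get used downstream in the $C_{p}$ slice computations. (Two points you rely on that are at the paper's level of rigor rather than fully spelled out: the ``only if'' half of Theorem~\ref{thm:Smashing}, i.e., that an equivalence forces the dimension equality, and the extension of the ceiling criterion to all integers $n$ rather than $n\ge 0$; neither is a gap relative to what the paper itself asserts.)
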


By studying the order in which these categories appear, we can also make significant progress towards determining the structure of slices.

\begin{definition}[{\cite[Section 4.2]{HHR}}]
Let $P^{n-1}(-)$ denote the nullification functor which makes $\tau_{\geq n}$ acyclic. Let $P_{\geq n}(E)$ denote the fiber of the natural map
\[
E\to P^{n-1}(E),
\]
and let $P^n_n(E)=P^n P_{\geq n}(E)$ be the {\defemph{$n$th slice}}.

The {\defemph{slice tower}} is the natural tower of functors 
\[
\dots\to P^{n+1}(-)\to P^{n}(-)\to\dots,
\]
where the natural transformation $P^{n+1}(-)\to P^{n}(-)$ arises from the obvious inclusions $\tau_{\geq (n+1)}\subset\tau_{\geq n}$.
\end{definition}

The equivalences of the localizing subcategories given by smashing with particular representation spheres produces equivalences between the slices. The first of these was used extensively in \cite{HHR}:
\begin{proposition}[{\cite[Corollary 4.25]{HHR}}]
For any finite $G$ and for any $n\in\mathbb Z$, we have a natural equivalence
\[
P^{n+|G|}_{n+|G|}\Sigma^{\rho_G}(-)\simeq \Sigma^{\rho_G} P^n_n(-).
\]
\end{proposition}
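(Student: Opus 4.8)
The plan is to obtain this as a formal consequence of the equivalence $\Sigma^{\rho_G}\colon\tau_{\geq m}\xrightarrow{\cong}\tau_{\geq m+|G|}$ of \cite[Corollary 4.24]{HHR}. The single lemma I would isolate and prove first is that smashing with $S^{\rho_G}$ intertwines the truncation functors: for every integer $m$ there are natural equivalences
\[
\Sigma^{\rho_G}P^{m-1}(-)\simeq P^{m+|G|-1}\Sigma^{\rho_G}(-)
\qquad\text{and}\qquad
\Sigma^{\rho_G}P_{\geq m}(-)\simeq P_{\geq m+|G|}\Sigma^{\rho_G}(-).
\]
Granting this, the proposition is immediate: taking $m=n+1$ in the first equivalence and $m=n$ in the second,
\[
\Sigma^{\rho_G}P^n_n(-)=\Sigma^{\rho_G}P^nP_{\geq n}(-)\simeq P^{n+|G|}\Sigma^{\rho_G}P_{\geq n}(-)\simeq P^{n+|G|}P_{\geq n+|G|}\Sigma^{\rho_G}(-)=P^{n+|G|}_{n+|G|}\Sigma^{\rho_G}(-),
\]
and every equivalence used is natural in the input.

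So the content is in the lemma, and I would prove it by a transport-of-localization argument. First, $\Sigma^{\rho_G}$ is an autoequivalence of genuine $G$-spectra with inverse $\Sigma^{-\rho_G}$, and by \cite[Corollary 4.24]{HHR} it carries the localizing subcategory $\tau_{\geq m}$ onto $\tau_{\geq m+|G|}$: the inclusion $\Sigma^{\rho_G}\tau_{\geq m}\subseteq\tau_{\geq m+|G|}$ is that corollary, and $\Sigma^{-\rho_G}\tau_{\geq m+|G|}\subseteq\tau_{\geq m}$ follows from its essential surjectivity. Next I would record that being slice $m$-coconnective is exactly being right-orthogonal to $\tau_{\geq m}$: the defining vanishing $[S^{k\rho_H+r},E]^H=0$ for $k|H|\ge m$ and $r\ge 0$ says precisely that $[A,E]=0$ for every $A$ in the localizing subcategory generated by the cells $G_+\wedge_H S^{k\rho_H}$, since that subcategory is built from those cells under coproducts, extensions, retracts, and nonnegative suspensions, and $[-,E]$ respects all of these. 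Hence $\Sigma^{\rho_G}$, which preserves mapping spectra, sends slice $m$-coconnective spectra to slice $(m+|G|)$-coconnective ones. Now apply the exact functor $\Sigma^{\rho_G}$ to the defining fiber sequence $P_{\geq m}(X)\to X\to P^{m-1}(X)$; the result is a fiber sequence whose first term lies in $\tau_{\geq m+|G|}$ and whose last term is slice $(m+|G|)$-coconnective, so — by the uniqueness (up to canonical equivalence) of a decomposition into a slice-connective piece and a slice-coconnective piece, which follows from $[\tau_{\geq m+|G|},\ \text{slice }(m+|G|)\text{-coconnective}]=0$ — it is canonically the defining fiber sequence $P_{\geq m+|G|}(\Sigma^{\rho_G}X)\to\Sigma^{\rho_G}X\to P^{m+|G|-1}(\Sigma^{\rho_G}X)$. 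This yields both natural equivalences simultaneously, with naturality forced by the universal properties.

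I do not anticipate a real obstacle: the sole nontrivial ingredient is \cite[Corollary 4.24]{HHR}, and everything else is formal manipulation of Bousfield (co)localizations. The step that deserves the most care is the identification of slice $m$-coconnective spectra with $(\tau_{\geq m})^\perp$ — this is where the specific closure properties of $\tau_{\geq m}$ are used — together with the bookkeeping of indices; once those are in place, the transport of the localization functors along the autoequivalence, and the naturality of the resulting equivalences, follow from universal properties.
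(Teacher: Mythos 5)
Your proposal is correct, and it is essentially the argument behind the cited result: the paper itself quotes this as \cite[Corollary 4.25]{HHR} without proof, and there (as in your write-up) it is deduced formally from the equivalence $\Sigma^{\rho_G}\colon\tau_{\geq m}\xrightarrow{\cong}\tau_{\geq m+|G|}$ of \cite[Corollary 4.24]{HHR} together with the uniqueness of the decomposition of a spectrum into a slice-connective piece and a slice-coconnective piece. Your identification of the slice $m$-coconnective spectra with $(\tau_{\geq m})^{\perp}$ and the index bookkeeping are both accurate, so there is nothing to add.
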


In particular, for a general group $G$, there are only $|G|$ slice functors which need to be determined: $P_n^n(-)$ for $0\leq n<|G|$. Our larger collection of equivalences simplifies this significantly for $G=C_{p^k}$. In particular, for $G=C_p$, we see that there are $3$ functors which determine all slices: the zeroth, first, and second. The first two of these were determined in \cite[Proposition 4.50]{HHR}, and a shift of the third was determined by Ullman in \cite[Corollary 8.9]{UllmanThesis}. This gives us a complete description of the slices for any $C_p$-spectrum.

\begin{MainTheorems}\label{thm:CpSlices}
When $G=C_p$ with $p>2$, then the slices of a $C_p$-spectrum $E$ are given by
\begin{align*}
& P_{mp}^{mp}(E) & \simeq & \Sigma^{m\rho} H\m{\pi}_{m\rho}(E) & \\
& P_{mp+2k+1}^{mp+2k+1}(E) & \simeq & \Sigma^{m\rho+k\lambda+1} H\mathcal P^0 \m{\pi}_{m\rho+k\lambda+1}(E) & 0\leq k\leq \tfrac{p-3}{2} \\
& P_{mp+2k+2}^{mp+2k+2}(E) & \simeq & \Sigma^{m\rho+(k+1)\lambda} H \big(EC_p\otimes\m{\pi}_{m\rho+(k+1)\lambda}(E)\big) & 0\leq k\leq \tfrac{p-3}{2},
\end{align*}
where $\mathcal P^0$ is the functor which sends a Mackey functor to the largest quotient on which the restriction map is injective and where $EC_p\otimes -$ is the functor which takes a Mackey functor to the subMackey functor generated by the underlying abelian group.
\end{MainTheorems}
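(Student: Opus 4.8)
The plan is to compute the slice functors $P^n_n$ for all $n\in\Z$ by determining the three ``base'' slices, in degrees $0$, $1$ and $2$, and transporting them to every other degree via the suspension equivalences of Section~\ref{sec:RepSpheres}.

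First I would record the relevant equivalences. By Theorem~A a $C_p$-spectrum $X$ lies in $\tau_{\geq n}$ exactly when its underlying spectrum is $(n-1)$-connected and $\Phi^{C_p}X$ is $(\lceil n/p\rceil-1)$-connected. Since $\Phi^{C_p}S^{\lambda}\simeq S^0$ while $S^{\lambda}$ has underlying spectrum $S^2$, this shows (compare Section~\ref{sec:RepSpheres}) that $\Sigma^{\lambda}$ restricts to an equivalence $\tau_{\geq j}\xrightarrow{\simeq}\tau_{\geq j+2}$ if and only if $\lceil j/p\rceil=\lceil (j+2)/p\rceil$, i.e.\ $j\not\equiv 0,-1\pmod{p}$. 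A suspension equivalence $\Sigma^{V}\colon\tau_{\geq a}\simeq\tau_{\geq a+c}$ that also restricts to $\tau_{\geq a+1}\simeq\tau_{\geq a+c+1}$ intertwines the associated nullifications and truncations, hence satisfies $P^{a+c}_{a+c}\Sigma^{V}\simeq\Sigma^{V}P^{a}_{a}$. Applying this to $\Sigma^{\lambda}$ at $j$ and $j+1$ (legitimate when $j\bmod p\in\{1,\dots,p-3\}$) and iterating, $\Sigma^{k\lambda}$ intertwines $P^1_1$ with $P^{2k+1}_{2k+1}$ and $P^2_2$ with $P^{2k+2}_{2k+2}$ for $0\leq k\leq\tfrac{p-3}{2}$; the bound on $k$ is precisely the condition that every intermediate degree avoid $0,-1\bmod p$, equivalently that the chain of shifts stays inside one interval $[mp,(m+1)p)$. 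Combining with the equivalences $P^{n+p}_{n+p}\Sigma^{\rho}\simeq\Sigma^{\rho}P^{n}_{n}$ of \cite[Corollary 4.25]{HHR} and their inverses, I get, for all $m\in\Z$ and $0\leq k\leq\tfrac{p-3}{2}$,
\begin{gather*}
P^{mp}_{mp}\simeq\Sigma^{m\rho}\,P^0_0\,\Sigma^{-m\rho},\\
P^{mp+2k+1}_{mp+2k+1}\simeq\Sigma^{m\rho+k\lambda}\,P^1_1\,\Sigma^{-m\rho-k\lambda},\\
P^{mp+2k+2}_{mp+2k+2}\simeq\Sigma^{m\rho+k\lambda}\,P^2_2\,\Sigma^{-m\rho-k\lambda}.
\end{gather*}
As the residues $0$; $1,3,\dots,p-2$; $2,4,\dots,p-1$ exhaust $\Z/p$, this reduces the problem to the three base slices.

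Next I would pin down those base cases. The identifications $P^0_0(E)\simeq H\m{\pi}_0(E)$ and $P^1_1(E)\simeq\Sigma H\mathcal{P}^0\m{\pi}_1(E)$ are \cite[Proposition 4.50]{HHR}, and $P^2_2(E)\simeq\Sigma^{\lambda}H\big(EC_p\otimes\m{\pi}_{\lambda}(E)\big)$ follows from \cite[Corollary 8.9]{UllmanThesis} once Ullman's indexing is transported to ours by the equivalences above. Theorem~A makes the slice-$\geq$ halves of these statements immediate: for instance $\Sigma^{\lambda}H(EC_p\otimes\m{M})$ is slice $\geq 2$ since its underlying spectrum is $1$-connected, while $\Phi^{C_p}H(EC_p\otimes\m{M})$ is connective with $\pi_0$ the cokernel of the transfer of the Mackey functor $EC_p\otimes\m{M}$, which vanishes because that Mackey functor is generated by its underlying abelian group; hence $\Phi^{C_p}$ lies in the localizing subcategory generated by $S^1=S^{\lceil 2/p\rceil}$. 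What does \emph{not} follow formally is that these spectra are slice $\leq 2$ --- equivalently that $\pi^{C_p}_{k\rho+r-\lambda}H(EC_p\otimes\m{M})=0$ for all $k\geq 1$ and $r\geq 0$, together with the identification of the $2$-slices as such suspended \EM{} spectra --- and this is exactly Ullman's $RO(C_p)$-graded computation.

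Finally I would assemble the pieces: substituting the base cases into the equivalences above and using $\m{\pi}_{W}(\Sigma^{-V}X)\cong\m{\pi}_{V+W}(X)$ gives the three displayed formulas of the theorem verbatim; for instance
\begin{multline*}
P^{mp+2k+2}_{mp+2k+2}(E)\simeq\Sigma^{m\rho+k\lambda}\Sigma^{\lambda}H\big(EC_p\otimes\m{\pi}_{\lambda}(\Sigma^{-m\rho-k\lambda}E)\big)\\
\simeq\Sigma^{m\rho+(k+1)\lambda}H\big(EC_p\otimes\m{\pi}_{m\rho+(k+1)\lambda}(E)\big).
\end{multline*}
The only genuine obstacle I anticipate is the degree-$2$ base case: Theorem~A trivializes slice-connectivity, but establishing that $P^2_2(E)$ is precisely $\Sigma^{\lambda}H(EC_p\otimes\m{\pi}_{\lambda}(E))$ --- and not some larger Mackey functor --- rests on the honest $RO(C_p)$-graded input of \cite{UllmanThesis}. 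Everything else is bookkeeping with suspension equivalences, rendered routine by Theorem~A and Section~\ref{sec:RepSpheres}.
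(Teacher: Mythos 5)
Your proposal is correct and follows essentially the same route as the paper: establish the $\Sigma^{\lambda}$-equivalences between slice categories via Theorem~A (the paper's Theorem~\ref{thm:CpSliceCategories}), use them together with $\Sigma^{\rho}$ to reduce every slice to the base cases $P^0_0$, $P^1_1$, $P^2_2$ (the paper's Corollary~\ref{cor:CpSlicesOne}), quote \cite[Proposition 4.50]{HHR} for degrees $0$ and $1$, and obtain the degree-$2$ case by transporting Ullman's $(-1)$-slice formula through the same equivalences. Your identification of the genuinely non-formal input --- the $RO(C_p)$-graded content behind Ullman's Corollary 8.9 --- matches exactly where the paper defers to \cite{UllmanThesis}.
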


\subsection*{Notation} In all that follows, let $G$ be a finite group. We will also rely heavily on a comparison with the Postnikov tower, so we fix some notation here.
\begin{definition}
Let $\tau_{\geq n}^{Post}$ denote the localizing subcategory generated by $G_+\wedge_H S^k$ for all $k\geq n$ and all $H\subset G$. We will say that $E$ is {\defemph{$n$-connective}} if it is in $\tau_{\geq n}^{Post}$.
\end{definition}

In particular, our notion of ``$n$-connective'' refers to membership in a particular localizing subcategory, and that this category is closed under smashing with any finite $G$-set.

On the algebraic side, we will work almost exclusively in the category of Mackey functors. We will denote these with underlined Roman characters. Similarly, we will denote the natural Mackey extensions of ordinary functors on abelian groups with an underline. 

\subsection*{Acknowledgements}

The authors thank Andrew Blumberg, Tyler Lawson, and Mingcong Zeng for careful reads of early drafts of this paper. The authors also heartily thank Doug Ravenel for help with the representations for a general $C_{p^n}$ in Theorem~\ref{thm:CyclicpGroupEquivalences}.

\section{A new characterization of slices}\label{sec:Slices}

\subsection{Geometric fixed points version}
One of the most surprising early results about the slice filtration is that for a class of spectra, ``geometric spectra'', the slice tower is simply a reindexed form of the Postnikov tower. 
\begin{definition}
Let $\mathcal P$ denote the family of proper subgroups of $G$. Let $E\mathcal P$ denote a universal space for $\mathcal P$, and let $\tilde{E}\mathcal P$ denote the cofiber of the obvious map $E\mathcal P_+\to S^0$.
We say that a $G$-spectrum $E$ is {\defemph{geometric}} if the natural map
\[
E\to\tilde{E}\mathcal P\wedge E,
\]
induced by smashing the inclusion $S^0\to \tilde{E}\mathcal P$ with $E$, is a $G$-equivalence.
\end{definition}

\begin{lemma}[{\cite[Theorem 6.14]{SlicePrimer}}]\label{lem:RegularPost}
If $E$ is a geometric $G$-spectrum, then the slice tower of $E$ is a reindexed form of the Postnikov tower of $E$: for all $m\in\mathbb Z$, we have
\[
P^m_m(E)\simeq\begin{cases}
\Sigma^{k\rho_G} H\m{\pi}_k(E) & m=k\cdot |G| \\
\ast & {\text{otherwise.}}
\end{cases}
\]
\end{lemma}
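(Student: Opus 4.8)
The plan is to transport the slice filtration along the standard equivalence between geometric $G$-spectra and ordinary spectra given by geometric fixed points, using Theorem~A to locate the image of $\tau^G_{\geq n}$. I would start by recording the basic features of a geometric spectrum. Smashing the defining equivalence $E \simeq \tilde{E}\mathcal P \wedge E$ with $\mathrm{Res}_H$ and using that $\mathrm{Res}_H \tilde{E}\mathcal P$ is $H$-equivariantly contractible for every proper subgroup $H \subsetneq G$ shows that $\mathrm{Res}_H E \simeq \ast$ for all proper $H$; conversely, this vanishing forces $E \wedge E\mathcal P_+ \simeq \ast$, i.e.\ $E$ is geometric. Consequently $\Phi^H(E) \simeq \ast$ for all proper $H$, the genuine fixed points $E^G$ agree with $\Phi^G(E)$, and each homotopy Mackey functor $\m{\pi}_k(E)$ is concentrated at $G$, with value $\pi_k^G(E) = \pi_k(\Phi^G E)$.

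The main step will be to show that $P_{\geq n}$, $P^{n-1}$, and hence $P^n_n = P^n P_{\geq n}$ preserve geometric spectra. Restricting the defining triangle $P_{\geq n}(E) \to E \to P^{n-1}(E)$ to a proper subgroup $H$ and using $\mathrm{Res}_H E \simeq \ast$, the connecting map yields
\[
\mathrm{Res}_H P^{n-1}(E)\ \simeq\ \Sigma\, \mathrm{Res}_H P_{\geq n}(E).
\]
Now restriction preserves slice $n$-connectivity --- by the double-coset formula $\mathrm{Res}_H(G_+ \wedge_K S^{k\rho_K})$ is a wedge of generators $H_+ \wedge_L S^{k'\rho_L}$ with $k'|L| = k|K| \geq n$ --- and it preserves slice $n$-coconnectivity --- for $W \in \tau^H_{\geq n}$ and $X \in \tau^G_{<n}$ one has $[W, \mathrm{Res}_H X]^H = [\mathrm{Ind}_H^G W, X]^G = 0$, since induction carries slice generators to slice generators. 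Hence the left-hand side of the display lies in $\tau^H_{<n}$ and the right-hand side in $\tau^H_{\geq n}$, so both are zero; thus $\mathrm{Res}_H P_{\geq n}(E) \simeq \ast \simeq \mathrm{Res}_H P^{n-1}(E)$ for all proper $H$, which says $P_{\geq n}(E)$ and $P^{n-1}(E)$ are geometric. Applying this with $n+1$ in place of $n$ to the geometric spectrum $P_{\geq n}(E)$ then shows $P^n_n(E)$ is geometric as well.

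Next I would identify the tower. It is standard (see \cite{HHR}) that $\Phi^G$ restricts to an equivalence from geometric $G$-spectra to ordinary spectra, whose inverse sends an ordinary spectrum $Y$ to the geometric spectrum with geometric fixed points $Y$; in particular it sends $\Sigma^{k\rho_G} H\m{M}$ to $\Sigma^k HM$ when $\m{M}$ is the Mackey functor concentrated at $G$ with value $M$, using $\Phi^G(S^{k\rho_G}) = S^k$ and the fact that $H$ of such an $\m{M}$ has geometric fixed points the Eilenberg--Mac~Lane spectrum $HM$. For a geometric spectrum the proper-subgroup conditions in Theorem~A are vacuous, so a geometric $X$ lies in $\tau^G_{\geq m}$ exactly when $\Phi^G(X)$ is $\lceil m/|G|\rceil$-connective; hence the equivalence carries $\tau^G_{\geq m}$ onto the subcategory of $\lceil m/|G|\rceil$-connective spectra. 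Together with the previous step, this means the equivalence carries the slice tower of a geometric $E$ to the Postnikov tower of $\Phi^G(E)$ reindexed by $|G|$: concretely $\Phi^G(P_{\geq n}E)$ is the $\lceil n/|G|\rceil$-connective cover of $\Phi^G(E)$, and $\Phi^G(P^n_n E)$ is this cover further truncated above degree $\lceil (n+1)/|G|\rceil - 1$. That homotopy window is empty unless $|G|$ divides $n$, and when $n = k|G|$ it is the single degree $k$, so $\Phi^G(P^n_n E) \simeq \Sigma^k H\pi_k(\Phi^G E)$. Transporting back through the equivalence, and using that $\m{\pi}_k(E)$ is concentrated at $G$ with value $\pi_k(\Phi^G E)$, gives $P^n_n(E) \simeq \Sigma^{k\rho_G} H\m{\pi}_k(E)$ when $n = k|G|$ and $P^n_n(E) \simeq \ast$ otherwise.

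The hard part will be the bookkeeping in the last step: getting the reindexing by $|G|$ exactly right is what produces both the $|G|$-periodic indexing and the vanishing of slices in degrees not divisible by $|G|$, and one must be careful that the inverse equivalence really does return $\Sigma^{k\rho_G} H\m{\pi}_k(E)$ --- that is, one must pin down the geometric Eilenberg--Mac~Lane spectra and their slice connectivity. The step showing that the slice truncations preserve geometric spectra is short but essential, and its one nonformal ingredient is that restriction preserves slice $n$-coconnectivity.
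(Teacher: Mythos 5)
The paper does not prove this lemma at all --- it is quoted from \cite[Theorem 6.14]{SlicePrimer} --- so there is no internal proof to compare against; what matters is whether your argument is sound and non-circular within this paper's logical structure. Most of it is: the characterization of geometric spectra by vanishing of proper restrictions, the argument that $P_{\geq n}$ and $P^{n-1}$ preserve geometric spectra (restriction preserves both slice connectivity and coconnectivity, so the restricted fiber and cofiber lie in $\tau^H_{\geq n}\cap\tau^H_{<n}=\{\ast\}$), the identification of $\Phi^G(\Sigma^{k\rho_G}H\m{M})$ with $\Sigma^k HM$ for $\m{M}$ concentrated at $G/G$, and the final arithmetic with ceilings are all correct and nicely organized.

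The genuine gap is the circular appeal to Theorem~A. In this paper, Theorem~A (Theorem~\ref{thm:GeomFPVersion}) is proved via Lemma~\ref{lem:SlicesEtildeP}, whose proof is literally ``this is a restatement of Lemma~\ref{lem:RegularPost}'' --- i.e.\ of the statement you are trying to prove. The direction you actually need from Theorem~A, namely that a geometric $X$ with $\lceil m/|G|\rceil$-connective geometric fixed points lies in $\tau^G_{\geq m}$, is precisely the nontrivial content of the lemma; everything else in your write-up is formal. To repair this you must supply that input directly, for instance by showing $\tilde{E}\mathcal P\wedge S^k\simeq\tilde{E}\mathcal P\wedge S^{k\rho_G}$ lies in $\tau_{\geq k|G|}$ (the cofiber sequence $E\mathcal P_+\wedge S^{k\rho_G}\to S^{k\rho_G}\to\tilde{E}\mathcal P\wedge S^{k\rho_G}$ exhibits it as built from $S^{k\rho_G}$ and cells $G_+\wedge_H S^{k[G:H]\rho_H}$ with $H$ proper, all of which are slice $\geq k|G|$), and then running a cell induction over $\Phi^G(X)$. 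Alternatively, you can avoid the hard direction entirely: the easy direction of Theorem~A ($\Phi^G$ sends slice cells either to a point or to $S^k$ with $k\geq\lceil m/|G|\rceil$) shows $\Phi^G P_{\geq n}(E)$ is $\lceil n/|G|\rceil$-connective, and the adjunction $[S^{k\rho_G+r},Y]^G\cong[S^{k+r},\Phi^G Y]$ for geometric $Y$ shows $\Phi^G P^{n-1}(E)$ has no homotopy in degrees $\geq\lceil n/|G|\rceil$; a cofiber sequence with fiber $k$-connective and cofiber $k$-coconnective is automatically the Postnikov decomposition, which gives your identification without invoking the subcategory equivalence. As written, though, the proof assumes what it sets out to prove.
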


This gives us a shockingly powerful restatement of the slice filtration. We begin with two elementary lemmata.

\begin{lemma}\label{lem:SlicesEtildeP}
For any $G$-spectrum $E$ and for any $n\in\mathbb Z$, $\tilde{E}\mathcal P\wedge E\in\tau_{\geq n}^{G}$ if and only if $\Phi^G(E)\in\tau_{\geq n/|G|}^{Post}$.
\end{lemma}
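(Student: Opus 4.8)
The plan is to use Lemma~\ref{lem:RegularPost}: the spectrum $X:=\tilde{E}\mathcal{P}\wedge E$ is geometric, so its slice tower is a reindexed Postnikov tower, and this turns membership of $X$ in $\tau^{G}_{\geq n}$ into a statement about homotopy groups. First I would pin down what $X$ looks like. Since $\tilde{E}\mathcal{P}|_{K}\simeq\ast$ for every proper $K\subset G$, restriction of $X$ to every proper subgroup is contractible; and since the genuine $G$-fixed points of a geometric spectrum compute its geometric fixed points, $X^{G}\simeq\Phi^{G}(\tilde{E}\mathcal{P}\wedge E)\simeq\Phi^{G}(E)$ (using that $\Phi^{G}$ is symmetric monoidal with $\Phi^{G}(\tilde{E}\mathcal{P})\simeq S^{0}$). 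Hence the Mackey functor $\m{\pi}_{k}(X)$ is concentrated at level $G/G$, where it has value $\pi_{k}(\Phi^{G}E)$.

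For the forward implication, assume $X\in\tau^{G}_{\geq n}$. Then $P^{m}(X)\simeq\ast$ for every $m\leq n-1$, so every slice $P^{m}_{m}(X)$ with $m\leq n-1$ is contractible. By Lemma~\ref{lem:RegularPost} the only possibly non-contractible slices are the $P^{k|G|}_{k|G|}(X)\simeq\Sigma^{k\rho_{G}}H\m{\pi}_{k}(X)$, and $\Sigma^{k\rho_{G}}$ is invertible, so $\m{\pi}_{k}(X)=0$ for every $k$ with $k|G|\leq n-1$, i.e.\ for every $k<n/|G|$. Evaluating at $G/G$ gives $\pi_{k}(\Phi^{G}E)=0$ for $k<n/|G|$, which is exactly the statement that $\Phi^{G}(E)\in\tau^{Post}_{\geq n/|G|}$.

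For the converse, assume $\Phi^{G}(E)\in\tau^{Post}_{\geq n/|G|}$, i.e.\ $\pi_{k}(\Phi^{G}E)=0$ for $k<n/|G|$. Then $X^{G}=\Phi^{G}E$ is bounded below, and since $\m{\pi}_{k}(X)$ is concentrated at $G/G$ this makes $X$ a bounded-below $G$-spectrum, hence slice bounded below. On the other hand the hypothesis gives $\m{\pi}_{k}(X)=0$ for all $k<n/|G|$, and then Lemma~\ref{lem:RegularPost} shows that every slice $P^{m}_{m}(X)$ with $m<n$ is contractible. For a slice-bounded-below spectrum the vanishing of all slices below $n$ forces membership in $\tau^{G}_{\geq n}$ (take $N\leq n$ with $X\in\tau^{G}_{\geq N}$; the map $P_{\geq n}(X)\to P_{\geq N}(X)$ has cofiber built from the slices $P^{m}_{m}(X)$ with $N\leq m<n$, all of which vanish, so $P_{\geq n}(X)\simeq P_{\geq N}(X)\simeq X$), giving $X\in\tau^{G}_{\geq n}$.

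The step I expect to require the most care is the boundedness input in the converse: one needs that a geometric $G$-spectrum whose underlying $G$-fixed point spectrum is bounded below is itself slice bounded below, so that ``all slices of $X$ below $n$ vanish'' really does force $X\in\tau^{G}_{\geq n}$ (in the absence of a boundedness hypothesis there are spectra with no slices in some range that nonetheless fail to lie in the corresponding $\tau_{\geq n}$). An equivalent way to package this — and a route that avoids the convergence discussion altogether — is to use the symmetric monoidal equivalence $\Phi^{G}\colon\{\text{geometric }G\text{-spectra}\}\xrightarrow{\;\simeq\;}\Sp$, under which $\tilde{E}\mathcal{P}\wedge S^{k\rho_{G}}$ corresponds to $S^{k}$: the hypothesis then places $X$ in the localizing subcategory generated by $\{\,\tilde{E}\mathcal{P}\wedge S^{k\rho_{G}}:k\geq\lceil n/|G|\rceil\,\}$, and each such generator lies in $\tau^{G}_{\geq k|G|}\subseteq\tau^{G}_{\geq n}$ (for instance by applying Lemma~\ref{lem:RegularPost} to the bounded-below geometric spectrum $\tilde{E}\mathcal{P}\wedge S^{k\rho_{G}}$, whose fixed points are $S^{k}$). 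Finally the degenerate case $G=e$ should be recorded separately: there $\mathcal{P}=\varnothing$ and $\tilde{E}\mathcal{P}=S^{0}$, so both conditions reduce to $E$ being $n$-connective and the statement is a tautology.
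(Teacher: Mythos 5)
Your proof is correct and follows the same route as the paper: both reduce the statement to Lemma~\ref{lem:RegularPost} via the observation that $\tilde{E}\mathcal P\wedge E$ is geometric with $\Phi^G(\tilde{E}\mathcal P\wedge E)\simeq\Phi^G(E)$ and trivial restrictions to proper subgroups. The paper simply asserts the lemma is a "restatement" of Lemma~\ref{lem:RegularPost}, whereas you spell out the translation in both directions, including the boundedness/convergence point needed to pass from "all slices below $n$ vanish" to membership in $\tau_{\geq n}^G$ — a detail worth having explicit.
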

\begin{proof}
For any $G$-spectrum $E$, $\tilde{E}\mathcal P\wedge E$ is geometric. The result is then a restatement of Lemma~\ref{lem:RegularPost}.
\end{proof}

\begin{lemma}\label{lem:SlicesEPplus}
For any $G$-spectrum $E$ and $n\in\mathbb Z$, the following are equivalent:
\begin{enumerate}
\item $E\mathcal P_+\wedge E\in\tau_{\geq n}^G$
\item for all $H\subsetneq G$, $i_H^\ast E\in\tau_{\geq n}^H$.
\end{enumerate}
\end{lemma}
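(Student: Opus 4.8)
The plan is to exploit two stability properties of the slice-connective categories: induction $G_+\wedge_H(-)$ carries $\tau_{\geq n}^H$ into $\tau_{\geq n}^G$, and restriction $i_H^\ast$ carries $\tau_{\geq n}^G$ into $\tau_{\geq n}^H$. Both are visible on generators and then propagate by a cells argument, since induction and restriction commute with homotopy colimits, preserve cofiber sequences, and commute with suspension, while $\tau_{\geq n}$ is closed under all of these operations. For induction, $G_+\wedge_H(H_+\wedge_K S^{k\rho_K})\simeq G_+\wedge_K S^{k\rho_K}$ is again a slice cell of dimension $k|K|\geq n$. For restriction, the double coset formula together with the identity $i_L^\ast\rho_K\cong [K:L]\,\rho_L$ for $L\subseteq K$ shows that $i_H^\ast(G_+\wedge_K S^{k\rho_K})$ is a wedge of slice cells $H_+\wedge_L S^{m\rho_L}$ with $m|L| = k|K|\geq n$. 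Both statements are among the standard properties of the slice filtration recorded in \cite{HHR} and \cite{SlicePrimer}.

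For $(2)\Rightarrow(1)$, choose a model of $E\mathcal P$ as a $G$-CW complex; its cells are induced from proper subgroups, so smashing the skeletal filtration with $E$ and applying the projection formula $(G_+\wedge_H X)\wedge E\simeq G_+\wedge_H(X\wedge i_H^\ast E)$ shows that $E\mathcal P_+\wedge E$ lies in the localizing subcategory generated by $\{G_+\wedge_H i_H^\ast E : H\subsetneq G\}$. Assuming (2), each such generator lies in $\tau_{\geq n}^G$, because $i_H^\ast E\in\tau_{\geq n}^H$ and induction preserves slice connectivity; hence that localizing subcategory is contained in $\tau_{\geq n}^G$, and in particular $E\mathcal P_+\wedge E\in\tau_{\geq n}^G$.

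For $(1)\Rightarrow(2)$, fix $H\subsetneq G$. Every subgroup $K\subseteq H$ is a proper subgroup of $G$, so $(E\mathcal P)^K\simeq\ast$; thus $i_H^\ast E\mathcal P$ is $H$-equivariantly contractible and the collapse map $E\mathcal P_+\to S^0$ restricts to an $H$-equivalence $i_H^\ast(E\mathcal P_+)\xrightarrow{\ \simeq\ }S^0$. Smashing with $i_H^\ast E$ gives $i_H^\ast(E\mathcal P_+\wedge E)\simeq i_H^\ast E$, so if $E\mathcal P_+\wedge E\in\tau_{\geq n}^G$ then, applying $i_H^\ast$ and using that restriction preserves slice connectivity, $i_H^\ast E\in\tau_{\geq n}^H$. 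The only nonformal ingredient is the behavior of $\tau_{\geq n}$ under induction and restriction from the first paragraph; I regard the restriction computation, which rests on the double coset formula and the restriction formula for regular representations, as the one potentially fiddly point, but it is entirely standard and I would cite it rather than reprove it.
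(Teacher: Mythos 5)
Your proof is correct and follows essentially the same route as the paper's: for $(1)\Rightarrow(2)$ you use that $i_H^\ast(E\mathcal P_+\wedge E)\simeq i_H^\ast E$ for proper $H$ together with the fact that restriction preserves slice connectivity, and for $(2)\Rightarrow(1)$ you decompose $E\mathcal P_+\wedge E$ as a homotopy colimit of spectra $G_+\wedge_H i_H^\ast E\simeq G/H_+\wedge E$ and use that induction preserves slice connectivity, exactly as in the paper. The only difference is that you spell out the verification on slice cells of the induction and restriction stability properties, which the paper takes as standard.
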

\begin{proof}
Since 
\[
i_H^\ast (E\mathcal P_+\wedge E)\simeq i_H^\ast E
\]
for all proper subgroups $H$, and since for all $H\subset G$, $i_H^\ast(\tau_{\geq n}^G)\subset \tau_{\geq n}^H$, the first condition implies the second. For the other direction, we simply observe that since $i_H^\ast E\in\tau_{\geq n}^H$, we have that 
\[
G_+\wedge_H i_H^\ast E\cong G/H_+\wedge E\in\tau_{\geq n}^G.
\]
The spectrum $E\mathcal P_{+}\wedge E$ is a homotopy colimit for spectra of the form $G/H_+\wedge E$ for $H$ a proper subgroup of $G$, and since $\tau_{\geq n}^G$ is closed under homotopy colimits, we conclude it is also in $\tau_{\geq n}^G$.
\end{proof}

Putting these together, we get a very surprising new formulation of slice connectivity.

\begin{theorem}\label{thm:GeomFPVersion}
Let $n\in\mathbb Z$.
\begin{enumerate}
\item A $G$-spectrum $E$ is in $\tau_{\geq n}^G$ if and only if $i_H^\ast E\in\tau_{\geq n}^H$ for all proper subgroups $H$ and $\Phi^G(E)\in\tau_{\geq n/|G|}^{Post}$.
\item A $G$-spectrum $E$ is in $\tau_{\geq n}^G$ if and only if $\Phi^H(E)\in\tau_{\geq n/|H|}^{Post}$ for all $H\subset G$.
\end{enumerate}
\end{theorem}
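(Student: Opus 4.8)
The plan is to bootstrap both statements out of Lemmas~\ref{lem:SlicesEtildeP} and~\ref{lem:SlicesEPplus} using the isotropy separation cofiber sequence
\[
E\mathcal P_+\wedge E \longrightarrow E \longrightarrow \tilde{E}\mathcal P\wedge E,
\]
obtained by smashing $E$ with $E\mathcal P_+\to S^0\to\tilde{E}\mathcal P$. The only formal inputs needed are that $\tau_{\geq n}^G$ is closed under homotopy colimits and extensions (in particular under cofibers of maps between its objects), that restriction preserves slice connectivity, i.e.\ $i_H^\ast(\tau_{\geq n}^G)\subseteq\tau_{\geq n}^H$ (already used in the proof of Lemma~\ref{lem:SlicesEPplus}), and that geometric fixed points are computed intrinsically, $\Phi^K\circ i_H^\ast\simeq\Phi^K$ whenever $K\subseteq H\subseteq G$.

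For part (1), first suppose $E\in\tau_{\geq n}^G$. Then $i_H^\ast E\in\tau_{\geq n}^H$ for every $H$, in particular for every proper $H$; by Lemma~\ref{lem:SlicesEPplus} this gives $E\mathcal P_+\wedge E\in\tau_{\geq n}^G$, whence $\tilde{E}\mathcal P\wedge E$, being the cofiber of the map $E\mathcal P_+\wedge E\to E$ between two objects of $\tau_{\geq n}^G$, also lies in $\tau_{\geq n}^G$; Lemma~\ref{lem:SlicesEtildeP} then yields $\Phi^G(E)\in\tau_{\geq n/|G|}^{Post}$. Conversely, if $i_H^\ast E\in\tau_{\geq n}^H$ for all proper $H$ and $\Phi^G(E)\in\tau_{\geq n/|G|}^{Post}$, then $E\mathcal P_+\wedge E\in\tau_{\geq n}^G$ by Lemma~\ref{lem:SlicesEPplus} and $\tilde{E}\mathcal P\wedge E\in\tau_{\geq n}^G$ by Lemma~\ref{lem:SlicesEtildeP}, so $E$, sitting in an extension of $\tilde{E}\mathcal P\wedge E$ by $E\mathcal P_+\wedge E$, lies in $\tau_{\geq n}^G$.

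For part (2), I would induct on $|G|$. When $G$ is trivial, $\rho_G=\mathbb{R}$, so $\tau_{\geq n}^G=\tau_{\geq n}^{Post}$ and $\Phi^G=\id$, and there is nothing to prove. For the inductive step, part (1) reduces membership in $\tau_{\geq n}^G$ to the conditions $i_H^\ast E\in\tau_{\geq n}^H$ for all proper $H$ together with $\Phi^G(E)\in\tau_{\geq n/|G|}^{Post}$. For a proper $H$ we have $|H|<|G|$, so by the inductive hypothesis applied to $H$ and to $i_H^\ast E$ the condition $i_H^\ast E\in\tau_{\geq n}^H$ is equivalent to $\Phi^K(i_H^\ast E)\in\tau_{\geq n/|K|}^{Post}$ for all $K\subseteq H$; since $\Phi^K(i_H^\ast E)\simeq\Phi^K(E)$, this says exactly $\Phi^K(E)\in\tau_{\geq n/|K|}^{Post}$ for all $K\subseteq H$. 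Letting $H$ range over all proper subgroups, the totality of these conditions is precisely $\Phi^K(E)\in\tau_{\geq n/|K|}^{Post}$ for all proper $K$ (each proper $K$ being its own witness). Adjoining the condition at $K=G$ gives $\Phi^H(E)\in\tau_{\geq n/|H|}^{Post}$ for all $H\subseteq G$, completing the induction.

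Once the two lemmas are in hand the theorem is essentially formal, so there is no hard computational step; the point that takes the most care is the inductive bookkeeping in part (2) — in particular the observation that quantifying over $K$ contained in some proper subgroup is the same as quantifying over all proper $K$ — together with the (standard but essential) compatibility $\Phi^K\circ i_H^\ast\simeq\Phi^K$, which is what allows the geometric fixed points computed ``inside $H$'' to be identified with those computed inside $G$.
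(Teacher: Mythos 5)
Your proposal is correct and follows essentially the same route as the paper: part (1) via the isotropy separation cofiber sequence combined with Lemmas~\ref{lem:SlicesEPplus} and~\ref{lem:SlicesEtildeP}, and part (2) by iterating part (1) down the subgroup lattice (the paper phrases this as ``repeated application of the first,'' which your induction on $|G|$ makes precise). Your explicit invocation of $\Phi^K\circ i_H^\ast\simeq\Phi^K$ is a detail the paper leaves implicit but is exactly the right compatibility to cite.
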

\begin{proof}
The second condition follows from repeated application of the first along the lattice of subgroups.

For the first, we use isotropy separation. For any $E$, we have cofiber sequence
\[
E\mathcal P_+\wedge E\to E\to \tilde{E}\mathcal P\wedge E.
\]
For the forward direction, assume that $E\in\tau_{\geq n}^G$. This implies that for all $H\subset G$, we also have $i_H^\ast E\in\tau_{\geq n}^H$, giving the first condition. By Lemma~\ref{lem:SlicesEPplus}, $E\mathcal P_+\wedge E\in\tau_{\geq n}^G$. Since $\tau_{\geq n}^G$ is closed under taking cofibers, this implies that $\tilde{E}\mathcal P\wedge E\in\tau_{\geq n}^G$, and Lemma~\ref{lem:SlicesEtildeP} then gives the second condition.

For the reverse direction, Lemma~\ref{lem:SlicesEPplus} implies that $E\mathcal P_+\wedge E\in\tau_{\geq n}^G$, and Lemma~\ref{lem:SlicesEtildeP} implies that $\tilde{E}\mathcal P\wedge E$ is too. Since $\tau_{\geq n}^G$ is closed under extensions, this implies that $E\in\tau_{\geq n}$.
\end{proof}


\subsection{Connectivity version}
When $X$ is a $(-1)$-connected spectrum, then there is yet another version of the slice tower, this one based on connectivity of the fixed points, rather than the geometric fixed points. In his thesis, Ullman uses a decomposition of cells from $\tau_{\geq n}$ for negative $n$ along with an analogue of Brown-Comenetz duality to show that slice $n$-connectivity, for positive $n$, is equivalent to a statement about equivariant connectivity. The ultimate results in this section are equivalent to \cite[Theorem 8.10]{UllmanThesis}. Here, we present an alternative proof which makes use of the isotropy separation sequence.

Recall that the notion of connectivity for $G$-spaces and spectra involves the fixed points for all subgroups. In particular, for any $G$-spectrum $E$, the connectivity of $E$ is a function from the isomorphism classes of orbits of $G$ to the extended integers $\mathbb Z\cup\{\pm\infty\}$:
\[
\conn(E)(G/H):=\conn(E^H).
\]

\begin{definition}
If $n\geq 0$, let 
\[
\mnu_n\colon \pi_0 \cOrb_G\to\mathbb N\cup \{\infty\}
\]
be defined by
\[
\mnu_n(G/H)=\left\lceil \frac{n}{|H|}\right\rceil.
\]
\end{definition}

\begin{definition}
Let $\tau_{\geq \mnu_n}$ denote the localizing subcategory of $\Sp^G$ generated by all $G$-spectra $E$ with
\[
\conn(E)(G/H)\geq\mnu_n(G/H)
\]
for all $H\subset G$.
\end{definition}

These are closely related to the various slice positive categories. 

\begin{proposition}[{\cite[Lemma 4.38]{HHR}}]
If $k\cdot |H|\geq n$, then
\[
G_+\wedge_H S^{k\rho_H}\in\tau_{\geq\mnu_n}.
\]
\end{proposition}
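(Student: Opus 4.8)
The goal is to show that $G_+ \wedge_H S^{k\rho_H} \in \tau_{\geq \mnu_n}$ whenever $k|H| \geq n$; that is, to exhibit this spectrum as built (by cofiber sequences and homotopy colimits) from $G$-spectra $E$ whose fixed-point connectivity satisfies $\conn(E^K) \geq \lceil n/|K| \rceil$ for every $K \subset G$. The natural approach is to compute the fixed points of $G_+ \wedge_H S^{k\rho_H}$ directly and check the connectivity bound orbitwise, rather than to decompose it into simpler cells. By the standard double-coset / Mackey decomposition, for $K \subset G$ one has
\[
\big(G_+ \wedge_H S^{k\rho_H}\big)^K \simeq \bigvee_{[g] \in K\backslash G / H} \big(S^{k\rho_H}\big)^{g^{-1}Kg \cap H},
\]
where the right-hand wedge is over representatives $g$ and we view $g^{-1}Kg \cap H$ as acting on $S^{k\rho_H}$ via the inclusion into $H$. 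So the computation reduces to understanding fixed points of a regular representation sphere.

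First I would record the elementary fact that for a subgroup $L \subset H$, the $L$-fixed points of $S^{k\rho_H}$ form the sphere $S^{k\rho_H^L}$ where $\rho_H^L = (\rho_H)^L$ has dimension $|H/L|$ when $k \geq 0$ (the fixed subspace of the regular representation of $H$ under $L$ is $|H/L|$-dimensional, being the permutation representation on $H/L$). Hence $(S^{k\rho_H})^L$ is a sphere of dimension $k|H/L| = k|H|/|L|$. Plugging this into the wedge decomposition: each summand contributing to $(G_+ \wedge_H S^{k\rho_H})^K$ is a sphere of dimension $k|H|/|g^{-1}Kg \cap H|$, and since $|g^{-1}Kg \cap H| \leq |K|$, this dimension is at least $k|H|/|K| \geq n/|K|$. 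Therefore every summand, and so the whole wedge, is at least $\lceil n/|K| \rceil$-connected — wait, more carefully, a sphere $S^d$ is $(d-1)$-connected, so I want $d \geq \lceil n/|K|\rceil$; we have $d = k|H|/|g^{-1}Kg\cap H|$ is an integer and $d \geq n/|K|$, hence $d \geq \lceil n/|K| \rceil = \mnu_n(G/K)$. This shows $\conn\big((G_+\wedge_H S^{k\rho_H})^K\big) \geq \mnu_n(G/K)$ for all $K$, i.e. $G_+\wedge_H S^{k\rho_H}$ satisfies the connectivity condition defining the generators of $\tau_{\geq \mnu_n}$, so it lies in that category.

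The main obstacle, and the step requiring the most care, is the bookkeeping when $k < 0$: then $S^{k\rho_H}$ is a negative sphere and its $L$-fixed points are a negative-dimensional sphere of dimension $k|H/L|$, which is $\leq -|H/L|$. The hypothesis $k|H| \geq n$ can only hold with $k<0$ when $n \leq k|H| < 0$, but $\mnu_n$ is only defined for $n \geq 0$, so in fact we may assume $k \geq 1$ (if $k = 0$ then $n \leq 0$ and again $\mnu_n$ is not in play, or the statement is vacuous at $n=0$ since $\mnu_0 \equiv 0$ and $S^0$ is $(-1)$-connected). Thus I would open the proof by reducing to $k \geq 1$, $n \geq 1$, after which the fixed-point computation above goes through cleanly. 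A secondary point worth a sentence is that one should confirm $\tau_{\geq \mnu_n}$ as defined is closed under the operations needed — but since it is defined as the localizing subcategory \emph{generated by} all spectra satisfying the connectivity bound, it suffices to check that our spectrum is literally one of those generators, which is exactly what the fixed-point estimate provides.
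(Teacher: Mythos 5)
Your proof is correct and follows essentially the same route as the paper: both use the double coset decomposition of $i_K^\ast(G_+\wedge_H S^{k\rho_H})$, identify the $K$-fixed points of each summand as a sphere of dimension $k\,|H|/|g^{-1}Kg\cap H|$, and conclude by the inequality $|g^{-1}Kg\cap H|\leq |K|$ together with integrality of the dimension. Your explicit reduction to $k\geq 1$ and your remark about the connectivity convention for $S^d$ are reasonable housekeeping that the paper leaves implicit, but they do not constitute a different argument.
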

\begin{proof}
We must show that for all $K\subset G$, the $K$-fixed points of $G_+\wedge_H S^{k\rho_H}$ has connectivity at least $\mnu_n(G/K)$. The double coset formula gives us an equivalence
\[
i_K^\ast \Big(G_+\wedge_H S^{k\rho_H}\Big)\simeq \bigvee_{g\in K\backslash G / H} K_+\wedge_{K\cap gHg^{-1}} S^{i_K^\ast k\rho_{gHg^{-1}}}.
\]
The $K$-fixed points of the summand corresponding to $g$ have connectivity
\begin{multline*}
k\cdot [H\colon H\cap g^{-1} Kg]\geq \left\lceil \frac{n}{|H|}\right\rceil [H\colon H\cap g^{-1} K g] \geq \left\lceil \frac{n\cdot [H\colon H\cap g^{-1} Kg]}{|H|}\right\rceil \\ \geq \left\lceil \frac{n}{|H\cap g^{-1} Kg|}\right\rceil=\left\lceil\frac{n}{|gHg^{-1}\cap K|}\right\rceil\geq \mnu_n(G/K).
\end{multline*}
\end{proof}

\begin{corollary}[{\cite[Proposition 4.40]{HHR}}]
For any $n\geq 0$, 
\[
\tau_{\geq n}\subset \tau_{\geq \mnu_n}.
\]
\end{corollary}

It is somewhat surprising, but the converse is also true. This gives an alternative characterization of being slice $\geq n$. 
\begin{theorem}\label{thm:Slices}
For $n\geq 0$, we have
\[
\tau_{\geq \mnu_n}\subset\tau_{\geq n}.
\]
\end{theorem}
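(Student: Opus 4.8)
The plan is to induct on $|G|$, using the isotropy separation sequence exactly as in the proof of Theorem~\ref{thm:GeomFPVersion}. Suppose $E \in \tau_{\geq \mnu_n}^G$; we want $E \in \tau_{\geq n}^G$. Since $\tau_{\geq n}^G$ is closed under extensions, by the cofiber sequence
\[
E\mathcal P_+\wedge E\to E\to \tilde{E}\mathcal P\wedge E
\]
it suffices to show that both $E\mathcal P_+\wedge E$ and $\tilde{E}\mathcal P\wedge E$ lie in $\tau_{\geq n}^G$. For the first, by Lemma~\ref{lem:SlicesEPplus} we need $i_H^\ast E \in \tau_{\geq n}^H$ for all proper $H \subsetneq G$; since $i_H^\ast$ carries $\tau_{\geq \mnu_n}^G$ into $\tau_{\geq \mnu_n^H}^H$ (the connectivity bound $\lceil n/|K|\rceil$ for $K \subseteq H$ is literally the same function) and $\mnu_n^H$ corresponds to $\tau_{\geq n}^H$, the inductive hypothesis applied to the proper subgroup $H$ does the job. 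One caveat: the base case is $G$ trivial (or $|G|$ prime with the only proper subgroup trivial), where $\tau_{\geq n}^e = \tau_{\geq \mnu_n}^e = \tau_{\geq n}^{Post}$ tautologically; I should also check the bound $n \geq 0$ survives division, which it does since $\lceil n/|H| \rceil \geq 0$.

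For the remaining summand $\tilde{E}\mathcal P\wedge E$, I would use Lemma~\ref{lem:SlicesEtildeP}: it lies in $\tau_{\geq n}^G$ if and only if $\Phi^G(E) \in \tau_{\geq n/|G|}^{Post}$, i.e.\ (since $\Phi^G(E)$ is an ordinary spectrum) if and only if $\Phi^G(E)$ is $\lceil n/|G|\rceil$-connective. So the crux is: the hypothesis $E \in \tau_{\geq \mnu_n}^G$ should force $\Phi^G(E)$ to be $(\lceil n/|G|\rceil - 1)$-connected. The natural approach is to note that $\Phi^G$ is built from the $G$-fixed points of $\tilde{E}\mathcal P\wedge E$, and there is a standard cofiber sequence relating $(\tilde{E}\mathcal P\wedge E)^G$ to $E^G$ and the fixed points of $E\mathcal P_+\wedge E = \mathrm{hocolim}$ over the orbit category of proper subgroups. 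The hypothesis gives $\conn(E^G) \geq \lceil n/|G|\rceil$ directly (taking $H = G$ in the definition of $\tau_{\geq\mnu_n}$, checked on generators and extended since connectivity bounds pass to localizing subcategories — here I use that $\mnu_n(G/G) = \lceil n/|G|\rceil$), and it gives $\conn(E^H) \geq \lceil n/|H|\rceil \geq \lceil n/|G|\rceil$ for each proper $H$; since the homotopy colimit defining $E\mathcal P_+\wedge E$ only increases connectivity of fixed points, $(E\mathcal P_+\wedge E)^G$ is at least $(\lceil n/|G|\rceil - 1)$-connected, and the cofiber sequence then forces $\Phi^G(E) = (\tilde{E}\mathcal P \wedge E)^G$ to be $(\lceil n/|G|\rceil - 1)$-connected as well.

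I expect the main obstacle to be the last point: passing the connectivity hypothesis through the localizing subcategory $\tau_{\geq \mnu_n}^G$. The generators of $\tau_{\geq \mnu_n}^G$ have fixed-point connectivity $\geq \mnu_n$ by definition, but one must check this property is inherited by everything in the localizing subcategory — cofibers, retracts, arbitrary coproducts, and suspensions compatible with the indexing. Coproducts and retracts are harmless, and the cofiber of a map between two spectra each with $\conn(-)(G/H) \geq \lceil n/|H|\rceil$ again has $\conn \geq \lceil n/|H|\rceil - 1$... which is one short. The fix is that $\tau_{\geq \mnu_n}^G$ is actually closed under exactly the operations that preserve the stated bound (it is generated as a localizing subcategory, so the relevant closure is under cofibers of maps \emph{into} it, extensions, and coproducts, all of which do preserve $\conn(-)(G/H) \geq \mnu_n(G/H)$ on the nose because an extension of two $k$-connective spectra is $k$-connective). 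So I would phrase it as: every object of $\tau_{\geq \mnu_n}^G$ has $\conn(E^H) \geq \mnu_n(G/H)$ for all $H$, proved by the usual localizing-subcategory induction using that $k$-connectivity of fixed points is closed under coproducts, extensions, and filtered colimits. Once that lemma is in hand, the rest is the isotropy-separation bookkeeping above, and combined with the already-proven reverse inclusion (the Corollary) this yields $\tau_{\geq \mnu_n}^G = \tau_{\geq n}^G$.
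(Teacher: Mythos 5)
Your proof is correct and follows essentially the same route as the paper's: induction on $|G|$ via the isotropy separation sequence, handling $E\mathcal P_+\wedge E$ by the inductive hypothesis together with Lemma~\ref{lem:SlicesEPplus}, and handling $\tilde{E}\mathcal P\wedge E$ by showing $\Phi^G(E)$ has the connectivity required by Lemma~\ref{lem:SlicesEtildeP}. Your extra verification that every object of $\tau_{\geq\mnu_n}$ (not just the generators) inherits the fixed-point connectivity bounds is a point the paper leaves implicit, and you resolve it correctly by noting that the relevant closure operations (coproducts, cofibers, extensions, homotopy colimits) preserve connectivity on the nose.
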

\begin{proof}
We prove this by induction on the order of the group. For the trivial group, this is by definition, since the non-equivariant slice filtration is the same as the Postnikov filtration. Now assume that for all proper subgroups $H$, 
\[
\tau_{\geq\mnu_n}^H\subset\tau_{\geq n}^H.
\]
In particular, this implies that if $X$ is any $H$ spectrum with connectivity at least $\mnu_n$, then the induced spectrum $G_+\wedge_H X$ is in $\tau_{\geq n}$.

Consider $X$ in $\tau_{\geq\mnu_n}$. We argue via the isotropy separation sequence:
\[
E\mathcal P_+\wedge X\to X\to \tilde{E}\mathcal P\wedge X.
\]
The spectrum $E\mathcal P_+\wedge X$ is a homotopy colimit of a diagram built out of the restrictions of $X$ to proper subgroups. By the induction hypothesis, this is then in $\tau_{\geq n}$. Since $\tau_{\geq n}$ is a localizing subcategory, it therefore suffices to show that $\tilde{E}\mathcal P\wedge X$ is in $\tau_{\geq n}$. However, by assumption, the bottom homotopy group of $X$, and hence of $\tilde{E}\mathcal P\wedge X$ is in dimension at least $\mnu_n(G/G)$. This means that all slices of $\tilde{E}\mathcal P\wedge X$ are in dimensions at least 
\[
\mnu_n(G/G)\cdot |G|\geq n,
\]
and hence $\tilde{E}\mathcal P\wedge X$ is in $\tau_{\geq n}$.
\end{proof}


\section{Representation Spheres}\label{sec:RepSpheres}
One of the nicest features of this new characterization of slice greater than or equal to $n$ is that it makes showing that spectra, like representation spheres, are in a particular slice category absurdly easy. Even more exciting, it allows one to trivially check that smashing with a representation sphere moves us between slice categories the way we might like. This was a major undertaking in \cite{SlicePrimer} and \cite{Yarnall}, as it often required judicious choices of ambient representations and various downward inductions on cells.

\begin{definition}
If $V$ is a representation of $G$, then let $\m{\dim}_V$ be the function on isomorphism classes of orbits given by
\[
\m{\dim}_V(G/H)=\dim V^H.
\]
\end{definition}

\begin{theorem}
The representation sphere $S^V$ is in $\tau_{\geq n}$ if and only if for all $H\subset G$.
\[
\dim V^H\geq n/|H|.
\]
\end{theorem}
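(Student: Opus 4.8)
The plan is to read the result straight off the geometric fixed point characterization in Theorem~\ref{thm:GeomFPVersion}(2). The only inputs needed are two standard facts: first, that geometric fixed points send a representation sphere to the sphere on the fixed subspace, $\Phi^H(S^V)\simeq S^{V^H}=S^{\dim V^H}$, as an ordinary (non-equivariant) spectrum; and second, that for ordinary spectra the localizing subcategory $\tau_{\geq m}^{Post}$ is exactly the class of $(\lceil m\rceil-1)$-connected spectra, so that a sphere $S^d$ lies in $\tau_{\geq m}^{Post}$ precisely when $d\geq m$ (using that $d$ is an integer, so $d\geq m \iff d\geq\lceil m\rceil$).

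First I would record the computation of $\Phi^H(S^V)$: since $\Phi^H$ is symmetric monoidal and, on suspension spectra of based $G$-spaces, is computed by passing to $H$-fixed points, and since $(S^V)^H=S^{V^H}$ as based spaces, we get $\Phi^H(S^V)\simeq S^{\dim V^H}$.

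Then Theorem~\ref{thm:GeomFPVersion}(2) says that $S^V\in\tau_{\geq n}^G$ if and only if, for every subgroup $H\subset G$, the spectrum $\Phi^H(S^V)\simeq S^{\dim V^H}$ lies in $\tau_{\geq n/|H|}^{Post}$. By the second fact above, this membership is equivalent to the inequality $\dim V^H\geq n/|H|$, and quantifying over all $H$ gives exactly the asserted equivalence. No hypothesis on the sign of $n$ is needed, since Theorem~\ref{thm:GeomFPVersion} holds for all $n\in\mathbb{Z}$; for $n\le 0$ both sides are automatically true for any honest representation $V$, since then $\dim V^H\ge 0\ge n/|H|$.

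There is essentially no obstacle in this argument — all the content is carried by Theorem~\ref{thm:GeomFPVersion}, and the rest is formal. The single point to state carefully is the identification of the trivial-group Postnikov category $\tau_{\geq m}^{Post}$ with the spectra whose homotopy vanishes below $\lceil m\rceil$; this is precisely what turns the inequality $\dim V^H\geq\lceil n/|H|\rceil$ coming from membership into the cleaner inequality $\dim V^H\geq n/|H|$ in the statement. I would close with the remark that this recovers, with no further work, the fact that $S^V$ is slice $\geq n$ for $n=\min_{H\subset G}\,|H|\cdot\dim V^H$, a statement that previously required judicious choices of ambient representations and downward inductions on cells in \cite{SlicePrimer} and \cite{Yarnall}.
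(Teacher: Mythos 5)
Your proof is correct and follows essentially the same route as the paper: both arguments reduce the statement to Theorem~\ref{thm:GeomFPVersion} via the identification $\Phi^H(S^V)\cong S^{V^H}$ and the observation that $S^d\in\tau^{Post}_{\geq m}$ exactly when $d\geq m$. Your write-up is if anything slightly more careful about the ``only if'' direction and the ceiling bookkeeping, which the paper leaves implicit.
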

\begin{proof}
For all $H\subset G$, we have a natural isomorphism $\Phi^H(S^V)\cong S^{V^H}$. In particular
\[
\Phi^H(S^V)\cong S^{V^H}\in\tau_{\geq \m{\dim}_V(G/H)}^{Post}\subset\tau_{\geq n/|H|}^{Post}.
\]
The result follows from Theorem~\ref{thm:GeomFPVersion}.
\end{proof}

This gives a similarly vast generalization of how certain suspensions change slice connectivity and moreover why other suspensions do not. We use the following standard result, omitting the proof.

\begin{proposition}
If $E\in\tau_{\geq n}^{Post}$ and $E'\in\tau_{\geq m}^{Post}$, then $E\wedge E'\in\tau_{\geq n+m}^{Post}$.
\end{proposition}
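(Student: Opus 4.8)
The plan is to reduce the claim to a purely non-equivariant statement about smash products of Postnikov-connective spectra, which we can then settle by hand. The statement to prove is that if $E \in \tau_{\geq n}^{Post}$ and $E' \in \tau_{\geq m}^{Post}$, then $E \wedge E' \in \tau_{\geq n+m}^{Post}$. Since $\tau_{\geq n}^{Post}$ is the localizing subcategory generated by $G_+\wedge_H S^k$ for all $H \subset G$ and $k \geq n$, the first step is to observe that both smashing $-\wedge E'$ (for fixed $E'$) and the property of lying in a localizing subcategory closed under homotopy colimits behave well under the generators: it suffices to check the claim on generators of $\tau_{\geq n}^{Post}$, and then again on generators of $\tau_{\geq m}^{Post}$ for the second variable. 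So the whole thing reduces to showing that $(G_+ \wedge_H S^j) \wedge (G_+ \wedge_K S^\ell) \in \tau_{\geq n+m}^{Post}$ whenever $j \geq n$ and $\ell \geq m$.

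Next I would unwind the double smash product. We have $(G_+\wedge_H S^j)\wedge(G_+\wedge_K S^\ell) \cong G_+\wedge_H\bigl(i_H^\ast(G_+\wedge_K S^\ell)\wedge S^j\bigr)$, and the double coset formula rewrites $i_H^\ast(G_+\wedge_K S^\ell)$ as a wedge of spectra of the form $H_+\wedge_{H\cap gKg^{-1}} S^\ell$ (the $S^\ell$ here carries trivial action, so no twisting occurs). Inducing up in stages, each resulting summand is of the form $G_+\wedge_L S^{j+\ell}$ for various subgroups $L\subset G$, and $j+\ell \geq n+m$, so each such summand lies in $\tau_{\geq n+m}^{Post}$ by definition; since this category is closed under wedges, we are done. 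The key structural facts used are the projection formula $G_+\wedge_H(X\wedge i_H^\ast Y)\simeq (G_+\wedge_H X)\wedge Y$, transitivity of induction, the double coset formula, and closure of $\tau_{\geq n+m}^{Post}$ under homotopy colimits; all of these are standard.

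The main obstacle is not any single computation but rather being careful about the reduction to generators: one must verify that for a fixed spectrum $A$, the class $\{B : A\wedge B \in \tau_{\geq n+m}^{Post}\}$ is a localizing subcategory (closed under cofibers, retracts, suspensions, and arbitrary wedges/homotopy colimits), which follows since $-\wedge A$ preserves all of these and $\tau_{\geq n+m}^{Post}$ is itself localizing. Granting that, one applies the reduction once in each variable. Everything else is bookkeeping with induction and restriction. In fact, since the paper elects to omit this proof, a one-line remark that it follows from the double coset formula together with closure of the Postnikov-connective categories under homotopy colimits would suffice, but the argument above is the content.
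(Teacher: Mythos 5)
Your argument is correct: the paper deliberately omits the proof as standard, and your reduction to generators in each variable followed by the projection formula and double coset computation $(G_+\wedge_H S^j)\wedge(G_+\wedge_K S^\ell)\simeq\bigvee G_+\wedge_L S^{j+\ell}$ is exactly the standard argument the authors have in mind. Nothing to add.
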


\begin{theorem}\label{thm:Smashing}
Let $V$ be a virtual representation of $G$. If $\m{\dim}_V+\mnu_n\geq\mnu_{n+k}$, then smashing with $S^V$ induces a map
\[
\Sigma^V\colon\tau_{\geq n}\to\tau_{\geq n+k}.
\]

Moreover $\m{\dim}_V+\mnu_n=\mnu_{n+k}$ if and only if this map is an equivalence.
\end{theorem}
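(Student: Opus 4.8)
The plan is to use the geometric fixed point characterization from Theorem~\ref{thm:GeomFPVersion}(2) together with the connectivity characterization $\tau_{\geq n} = \tau_{\geq \mnu_n}$ from Theorem~\ref{thm:Slices} and its converse corollary, translating both hypotheses into statements about the numerical functions $\m{\dim}_V$ and $\mnu_n$. First I would prove the map exists under the inequality $\m{\dim}_V + \mnu_n \geq \mnu_{n+k}$: given $E \in \tau_{\geq n}$, I want $S^V \wedge E \in \tau_{\geq n+k}$. By Theorem~\ref{thm:GeomFPVersion}(2), it suffices to check that $\Phi^H(S^V \wedge E) \in \tau^{Post}_{\geq (n+k)/|H|}$ for all $H \subset G$. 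Since $\Phi^H$ is monoidal, $\Phi^H(S^V \wedge E) \simeq S^{V^H} \wedge \Phi^H(E)$, and since $E \in \tau_{\geq n}$ gives $\Phi^H(E) \in \tau^{Post}_{\geq n/|H|}$, hence $\Phi^H(E) \in \tau^{Post}_{\geq \lceil n/|H| \rceil} = \tau^{Post}_{\geq \mnu_n(G/H)}$, the smashing proposition for Postnikov categories yields $S^{V^H} \wedge \Phi^H(E) \in \tau^{Post}_{\geq \dim V^H + \mnu_n(G/H)}$. The hypothesis $\m{\dim}_V(G/H) + \mnu_n(G/H) \geq \mnu_{n+k}(G/H) \geq (n+k)/|H|$ then closes the argument.

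For the ``if'' direction of the equivalence, suppose $\m{\dim}_V + \mnu_n = \mnu_{n+k}$ as functions on orbits. Then $-\m{\dim}_V = \m{\dim}_{-V}$ satisfies $\m{\dim}_{-V} + \mnu_{n+k} = \mnu_n$, so by the first part $\Sigma^{-V}$ induces a map $\tau_{\geq n+k} \to \tau_{\geq n}$, and these are mutually inverse up to the canonical equivalences $\Sigma^V \Sigma^{-V} \simeq \id \simeq \Sigma^{-V}\Sigma^V$. (One must be slightly careful that these equivalences of functors on $\Sp^G$ restrict to the localizing subcategories, but this is immediate since the subcategories are full.)

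The main obstacle, and the subtler direction, is ``only if'': assuming $\Sigma^V\colon \tau_{\geq n} \to \tau_{\geq n+k}$ is an equivalence, I must deduce $\m{\dim}_V + \mnu_n = \mnu_{n+k}$ pointwise. The strategy is to test against the generating objects. For each $H \subset G$, the spectrum $G_+ \wedge_H S^{\mnu_n(G/H)\rho_H}$ lies in $\tau_{\geq n}$ (it is a generator, as $\mnu_n(G/H)|H| = \lceil n/|H|\rceil |H| \geq n$), and applying $\Sigma^V$ and computing $\Phi^K$ via the double coset formula must land in $\tau^{Post}_{\geq (n+k)/|K|}$; extracting connectivity at $K = H$ gives $\dim V^H + \mnu_n(G/H) \geq \mnu_{n+k}(G/H)$, which is just the forward inequality again. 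To get the reverse inequality $\dim V^H + \mnu_n(G/H) \leq \mnu_{n+k}(G/H)$, I would run the same argument with $\Sigma^{-V}$, which is a map $\tau_{\geq n+k}\to\tau_{\geq n}$ by invertibility, applied to the generator $G_+ \wedge_H S^{\mnu_{n+k}(G/H)\rho_H} \in \tau_{\geq n+k}$: this forces $-\dim V^H + \mnu_{n+k}(G/H) \geq \mnu_n(G/H)$. The delicate point is verifying that the two inequalities are simultaneously sharp at a single orbit $G/H$ rather than merely summing to the right total, which is why I single out the $K=H$ summand in the double coset decomposition — there the geometric fixed points of $G_+ \wedge_H S^{j\rho_H}$ contribute exactly $S^{j}$ up to wedge summands, pinning the connectivity. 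Combining both inequalities at every $H$ gives equality of the functions, completing the proof.
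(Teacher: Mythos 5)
Your proof of the first part and of the ``equality implies equivalence'' direction is essentially the paper's argument: reduce to geometric fixed points via Theorem~\ref{thm:GeomFPVersion}(2), use monoidality of $\Phi^H$ together with the Postnikov smashing proposition, and observe that nothing required $V$ to be an actual representation, so $\Sigma^{-V}$ supplies the inverse when equality holds. Where you go beyond the paper is the converse (``equivalence implies equality''): the paper's proof consists only of the $S^{-V}$ observation and does not spell out why an equivalence forces the numerical equality. Your test against the generators $G_+\wedge_H S^{j\rho_H}$ with $j=\mnu_{n+k}(G/H)$ closes this correctly: by the double coset formula $\Phi^H\big(G_+\wedge_H S^{j\rho_H}\big)$ is a nonempty wedge of copies of $S^j$ indexed by $N_G(H)/H$ (the summands induced from proper subgroups of $H$ die under $\Phi^H$), so its connectivity is exactly $j-1$, and membership of $S^{-V}\wedge G_+\wedge_H S^{j\rho_H}$ in $\tau_{\geq n}$ then forces $j-\dim V^H\geq \mnu_n(G/H)$, which is the reverse inequality at every orbit. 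The one place I would tighten the write-up is your worry about the two inequalities being ``simultaneously sharp'': there is nothing delicate here, since each inequality is obtained pointwise at each $H$ separately once you know the connectivity of the wedge is exactly $j-1$; also note that the forward inequality need not be re-derived from generators, as it is already the standing hypothesis under which ``this map'' is defined.
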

\begin{proof}
The first part follows immediately from the assumptions and from the previous proposition. The second follows from the observation that for this result, we never needed that $V$ be an actual representation, and hence smashing with $S^{-V}$ provides the inverse.
\end{proof}

We deduce from this a very surprising collection of auto-equivalences of the categories $\tau_{\geq n}$ for all $n$.

\begin{corollary}\label{cor:SliceEquivalences}
If $V$ is a virtual representation of $G$ such that for all $H\subset G$, 
\(
\m{\dim}_V(G/H)=0,
\)
then smashing with $S^V$ induces an auto-equivalence:
\[
\Sigma^V\colon\tau_{\geq n}\xrightarrow{\cong} \tau_{\geq n}
\] 
for all $n$.
\end{corollary}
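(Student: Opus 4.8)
The statement is essentially the $k=0$ case of Theorem~\ref{thm:Smashing}, so the plan is to extract it from there and then dispatch the one genuine subtlety, namely that $\mnu_n$ — and hence Theorem~\ref{thm:Smashing} as phrased — is only set up for $n\geq 0$. First I would observe that the hypothesis ``$\m{\dim}_V(G/H)=0$ for all $H\subset G$'' says exactly that $\m{\dim}_V$ is the zero function on $\pi_0\cOrb_G$, so trivially $\m{\dim}_V+\mnu_n=\mnu_n=\mnu_{n+0}$. Applying Theorem~\ref{thm:Smashing} with this $V$ and $k=0$ then gives both that $\Sigma^V$ restricts to an endofunctor of $\tau_{\geq n}$ and, because the inequality is an equality, that this endofunctor is an equivalence. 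I would also note explicitly that the virtual representation $-V$ satisfies the same hypothesis, since $\m{\dim}_{-V}=-\m{\dim}_V=0$; this is what pins down $\Sigma^{-V}$ as a two-sided inverse.

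To cover every $n\in\mathbb Z$ and sidestep any positivity restriction, I would instead run the argument directly through the geometric fixed points characterization of Theorem~\ref{thm:GeomFPVersion}(2), which is stated for all integers $n$. Let $E\in\tau_{\geq n}^G$ and $H\subset G$. There is a natural isomorphism $\Phi^H(S^V\wedge E)\cong S^{V^H}\wedge\Phi^H(E)$, and since $\dim V^H=0$ the virtual sphere $S^{V^H}$ is (canonically equivalent to) $S^0$, so $\Phi^H(S^V\wedge E)\simeq\Phi^H(E)\in\tau_{\geq n/|H|}^{Post}$. By Theorem~\ref{thm:GeomFPVersion}(2) this forces $S^V\wedge E\in\tau_{\geq n}^G$, so $\Sigma^V$ indeed lands in $\tau_{\geq n}$; the identical computation with $-V$ shows $\Sigma^{-V}$ does too. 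Since $\Sigma^V$ and $\Sigma^{-V}$ are already inverse self-equivalences of $\Sp^G$, they restrict to inverse auto-equivalences of $\tau_{\geq n}$.

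I do not expect a real obstacle here: the content is carried entirely by Theorems~\ref{thm:GeomFPVersion} and~\ref{thm:Smashing}. The only points that need a moment's care are the bookkeeping just described — verifying that $-V$ inherits the vanishing-dimension hypothesis so that the inverse functor is available, and using the geometric-fixed-points formulation to remove the restriction $n\geq 0$ that a naive appeal to Theorem~\ref{thm:Smashing} would impose.
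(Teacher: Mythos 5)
Your proposal is correct, and its first paragraph is exactly the argument the paper intends: the corollary is stated as an immediate consequence of Theorem~\ref{thm:Smashing} with $k=0$, since $\m{\dim}_V=0$ makes the inequality an equality, and $-V$ inherits the hypothesis so $\Sigma^{-V}$ supplies the inverse. Where you genuinely add something is the second paragraph. The paper's $\mnu_n$ is only defined for $n\geq 0$, and the proof of Theorem~\ref{thm:Smashing} leans on the identification $\tau_{\geq n}=\tau_{\geq\mnu_n}$, which is established only in that range; yet the corollary asserts the equivalence for all $n$. Your rerouting through Theorem~\ref{thm:GeomFPVersion}(2) --- using $\Phi^H(S^V\wedge E)\simeq S^{V^H}\wedge\Phi^H(E)$ with $\dim V^H=0$, so the geometric fixed points of $S^V\wedge E$ and of $E$ have the same Postnikov connectivity --- is valid for every integer $n$ and closes this gap cleanly. (An alternative, more in the spirit of the paper, would be to reduce negative $n$ to positive $n$ by smashing with $S^{k\rho_G}$, which commutes with $\Sigma^V$ and shifts $\tau_{\geq n}$ to $\tau_{\geq n+k|G|}$; but your argument is self-contained and arguably cleaner.) Either way, the two-sided invertibility of $\Sigma^V$ on all of $\Sp^G$ plus preservation of $\tau_{\geq n}$ by both $\Sigma^V$ and $\Sigma^{-V}$ is exactly what is needed to conclude.
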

These auto-equivalences are ubiquitous. In Proposition~\ref{prop:CyclicAutoEquivalences} below, we show that there are such auto-equivalences even for cyclic $p$-groups of order at least $5$.

Applying this to the slices themselves, we deduce an even more surprising corollary.
\begin{corollary}
If $V$ is a virtual representation of $G$ such that for all $H\subset G$, 
\(
\m{\dim}_V(G/H)=0, 
\)
then smashing with $S^V$ commutes with the formation of slices: for all $n$ and for all $G$-spectra $E$, we have
\[
\Sigma^V P_n^n(E)\simeq P_n^n (\Sigma^V E).
\]
\end{corollary}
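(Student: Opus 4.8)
The plan is to leverage the auto-equivalence from Corollary~\ref{cor:SliceEquivalences} at the level of localizing subcategories and upgrade it to a statement about the nullification functors that build the slices. First I would observe that since $\m{\dim}_V(G/H) = 0$ for all $H$, we also have $\m{\dim}_{-V} = \m{\dim}_V = 0$, so by Theorem~\ref{thm:Smashing} (applied with $k=0$) smashing with $S^V$ induces an equivalence $\Sigma^V\colon \tau_{\geq n} \xrightarrow{\cong} \tau_{\geq n}$ for every $n\in\mathbb Z$, with inverse $\Sigma^{-V}$. The key formal point is that an exact auto-equivalence of the stable category $\Sp^G$ that preserves the localizing subcategory $\tau_{\geq n}$ (together with its inverse also preserving it) necessarily commutes with the associated nullification functor $P^{n-1}$ and with $P_{\geq n}$, up to natural equivalence.

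The main steps I would carry out: (1) Recall that $P^{n-1}(E)$ is characterized by the universal property that $P_{\geq n}(E) \to E \to P^{n-1}(E)$ is a cofiber sequence with $P_{\geq n}(E)\in\tau_{\geq n}$ and $P^{n-1}(E)$ being $\tau_{\geq n}$-null (i.e., $[X, P^{n-1}(E)]^G = 0$ for all $X\in\tau_{\geq n}$). (2) Apply $\Sigma^V$ to this cofiber sequence; since $\Sigma^V$ is exact it yields a cofiber sequence $\Sigma^V P_{\geq n}(E) \to \Sigma^V E \to \Sigma^V P^{n-1}(E)$. (3) Check the two defining properties for this to be the slice decomposition of $\Sigma^V E$: first, $\Sigma^V P_{\geq n}(E) \in \tau_{\geq n}$ because $\Sigma^V$ preserves $\tau_{\geq n}$ by Corollary~\ref{cor:SliceEquivalences}; second, $\Sigma^V P^{n-1}(E)$ is $\tau_{\geq n}$-null, since for any $X\in\tau_{\geq n}$ we have $[X, \Sigma^V P^{n-1}(E)]^G \cong [\Sigma^{-V}X, P^{n-1}(E)]^G$, and $\Sigma^{-V}X \in \tau_{\geq n}$ by the same corollary, so this group vanishes. (4) By uniqueness of the nullification, conclude $\Sigma^V P_{\geq n}(E) \simeq P_{\geq n}(\Sigma^V E)$ and $\Sigma^V P^{n-1}(E) \simeq P^{n-1}(\Sigma^V E)$, naturally in $E$. (5) Finally, $P_n^n(\Sigma^V E) = P^n P_{\geq n}(\Sigma^V E) \simeq P^n \Sigma^V P_{\geq n}(E) \simeq \Sigma^V P^n P_{\geq n}(E) = \Sigma^V P_n^n(E)$, where the middle equivalence applies step (4) twice (once for $P_{\geq n}$ and once for $P^n$, noting $\Sigma^V$ also commutes with $P^n$ by the analogous argument with $\tau_{\geq n+1}$).

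I expect the main (though modest) obstacle to be step (3)/(4): carefully articulating that $P^{n-1}$ is uniquely determined by the localizing-subcategory-nullification universal property and that adjunction $[X, \Sigma^V Y]^G \cong [\Sigma^{-V}X, Y]^G$ holds $G$-equivariantly for a virtual representation sphere. Both are standard in the equivariant stable setting — $S^V$ is invertible in $\Sp^G$ so $\Sigma^V$ is an exact auto-equivalence — but one should state them cleanly rather than wave hands. Naturality in $E$ is automatic once everything is phrased via the universal property, since $P_{\geq n}$ and $P^{n-1}$ are functors and $\Sigma^V$ is a functor, so the comparison equivalences are natural transformations between functors. No genuinely hard input is needed beyond Corollary~\ref{cor:SliceEquivalences}; the content is entirely in the bookkeeping of nullification functors.
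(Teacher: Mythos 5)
Your argument is correct and is exactly the route the paper intends: the paper states this corollary as an immediate consequence of Corollary~\ref{cor:SliceEquivalences} without writing out the details, and your steps (applying the exact auto-equivalence $\Sigma^V$ to the defining cofiber sequence $P_{\geq n}(E)\to E\to P^{n-1}(E)$, checking both the connectivity and nullity conditions using that $\Sigma^{\pm V}$ preserve $\tau_{\geq n}$, and invoking uniqueness of the nullification) are precisely the standard bookkeeping being elided. No discrepancy with the paper's approach.
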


Several non-inclusion results also follow immediately from Theorem~\ref{thm:Smashing}: if we
do not have the desired inequality, then we do not have such a nice
embedding. An example is given by the reduced regular representation
spheres. Classically,
\[
S^{2\bar{\rho}}\not\in\tau_{\geq 2(|G|-1)}
\]
even though $S^{\bar{\rho}}$ is in $\tau_{\geq |G|-1}$. Here we see why: for all $G$, $\mnu_{2|G|-2}(G/G)>0$, while the connectivity of $S^{\bar{\rho}}$ has $\m{\dim}_{\bar{\rho}}(G/G)=0$.

\section{Slices for Cyclic $p$-groups}\label{sec:SlicesforCyclic}

Specializing to cyclic $p$-groups with $p$ odd, we have an interesting series of results. Choose a $p^n$th root of unity $\zeta$ and let $\lambda(k)$ denote the representation of $C_{p^n}$ sending a generator $\gamma$ to $\zeta^k$. The representations $\lambda(k)$ for $1\leq k\leq (p-1)/2$ are representatives for the isomorphism classes of non-trivial real representations of $C_{p^n}$. Work of Kawakubo shows that the representation spheres are all integrally inequivalent \cite{Kawakubo80}. Even so, the exact choices are immaterial due to the following observation.

\begin{proposition}\label{prop:CyclicAutoEquivalences}
If the $p$-adic valuations of $k$ and $k'$ agree, then smashing with 
\[
S^{\lambda(k)-\lambda(k')}
\]
induces an automorphism of the slice categories $\tau_{\geq m}$ for all $m$.
\end{proposition}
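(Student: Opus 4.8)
The plan is to reduce the statement immediately to Corollary~\ref{cor:SliceEquivalences}. By that corollary, it suffices to show that the virtual representation $V=\lambda(k)-\lambda(k')$ satisfies $\m{\dim}_V(\Cpn/H)=0$ for every subgroup $H\subset\Cpn$; equivalently, that $\dim\lambda(k)^H=\dim\lambda(k')^H$ for every $H$. Once this is known, smashing with $S^V$ is an auto-equivalence of $\tau_{\geq m}$ for all $m$, with inverse given by smashing with $S^{-V}=S^{\lambda(k')-\lambda(k)}$.

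Since $\Cpn$ is cyclic, its subgroups are exactly the $\Cp{j}$ for $0\leq j\leq n$, with $\Cp{j}$ generated by $\gamma^{p^{n-j}}$. The element $\gamma^{p^{n-j}}$ acts on the two-dimensional real representation $\lambda(k)$ as rotation by $\zeta^{kp^{n-j}}$. I would then observe that this rotation is the identity precisely when $p^n\mid kp^{n-j}$, i.e.\ when $p^{j}\mid k$, i.e.\ when $v_p(k)\geq j$; in that case $\lambda(k)^{\Cp{j}}=\lambda(k)$ has dimension $2$. When $v_p(k)<j$ the rotation is nontrivial, and here the assumption that $p$ is odd enters: a nontrivial $p^n$-th root of unity is never $-1$, so this rotation has no nonzero fixed vector and $\lambda(k)^{\Cp{j}}=0$. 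Thus
\[
\dim\lambda(k)^{\Cp{j}}=\begin{cases} 2 & v_p(k)\geq j \\ 0 & v_p(k)<j. \end{cases}
\]

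Comparing with the identical formula for $\lambda(k')$, the hypothesis $v_p(k)=v_p(k')$ gives $\dim\lambda(k)^{\Cp{j}}=\dim\lambda(k')^{\Cp{j}}$ for every $j$, hence $\m{\dim}_V\equiv 0$, and Corollary~\ref{cor:SliceEquivalences} finishes the argument. There is no serious obstacle; the only point requiring care is the eigenvalue case analysis for the rotation, where the oddness of $p$ is exactly what rules out a nontrivial rotation possessing a fixed line.
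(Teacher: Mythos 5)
Your proof is correct and follows the same route as the paper: the paper's own argument is simply the observation that $\lambda(k)$ and $\lambda(k')$ have equal fixed-point dimensions for every subgroup of $C_{p^n}$, followed by an appeal to Corollary~\ref{cor:SliceEquivalences}, and you have merely supplied the (correct) elementary verification of that observation. One small remark: the oddness of $p$ is not actually needed where you invoke it, since any nontrivial rotation of $\mathbb{R}^2$ --- including $-I$ --- has zero fixed subspace, so the dimension formula holds for all $p$.
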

\begin{proof}
For all subgroups of $C_{p^n}$, the fixed points of $\lambda(k)$ and $\lambda(k')$ agree. The result follows from Corollary~\ref{cor:SliceEquivalences}.
\end{proof}

Since any two $k$ with equal $p$-adic valuations yield naturally equivalent functors, we will  let $\lambda_k=\lambda(p^k)$. Finally, let $\lambda=\lambda_0$.

\begin{definition}\label{def:Vj}
Let $k\geq 1$ and $0 \leq j \leq k-1$. Define the
$C_{p^k}$-representation $V_j$ as follows

\[
V_j = \bigoplus_{i=0}^j \left(p^i - \lfloor p^{i-1}\rfloor\right) \lambda_{j-i}
\]

\end{definition}

\begin{theorem}\label{thm:CyclicpGroupEquivalences}
For $G = C_{p^k}$ and $0 \leq j \leq k-1$, smashing with $S^{V_j}$ induces equivalences
\[
\Sigma^{V_j}\colon\tau_{\geq n}\xrightarrow{\cong} \tau_{n+2p^j}
\]
for all $n \geq 0$, $n \equiv m \mbox{ (mod }p^{j+1})$ where $1\leq m \leq p^{j+1}-2p^j$.
\end{theorem}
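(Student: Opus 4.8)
The plan is to recognize this statement as an instance of Theorem~\ref{thm:Smashing}. By that theorem, smashing with $S^{V_j}$ is an equivalence $\tau_{\geq n}\xrightarrow{\cong}\tau_{\geq n+2p^j}$ precisely when the function $\m{\dim}_{V_j}+\mnu_n$ agrees with $\mnu_{n+2p^j}$ on $\pi_0\cOrb_{\Cp{k}}$ (the ``moreover'' clause tells us exact equality is both necessary and sufficient, so checking equality is exactly the right task). Since the subgroups of $\Cp{k}$ are the $\Cp{\ell}$ for $0\leq\ell\leq k$, with $\Cp{\ell}$ generated by $\gamma^{p^{k-\ell}}$, the whole problem reduces to verifying, for each $\ell$, the numerical identity
\[
\dim V_j^{\Cp{\ell}}+\Big\lceil\frac{n}{p^\ell}\Big\rceil=\Big\lceil\frac{n+2p^j}{p^\ell}\Big\rceil .
\]

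The first step is to compute $\dim V_j^{\Cp{\ell}}$. Each $\lambda_a$ appearing in $V_j$ has $0\leq a\leq k-1$, hence is an irreducible $2$-dimensional real representation, and the generator $\gamma^{p^{k-\ell}}$ of $\Cp{\ell}$ acts on it as $\zeta^{p^{a+k-\ell}}$, which is trivial exactly when $a\geq\ell$. Thus $\lambda_a^{\Cp{\ell}}$ is all of $\lambda_a$ when $a\geq\ell$ and is $0$ otherwise, so
\[
\dim V_j^{\Cp{\ell}}=2\sum_{\substack{0\leq i\leq j\\ j-i\geq\ell}}\bigl(p^i-\lfloor p^{i-1}\rfloor\bigr).
\]
Here I would invoke the telescoping identity $\sum_{i=0}^{m}\bigl(p^i-\lfloor p^{i-1}\rfloor\bigr)=p^m$ (the $i=0$ summand is $1$ and the remaining summands telescope) to conclude that $\m{\dim}_{V_j}(\Cp{k}/\Cp{\ell})$ equals $2p^{j-\ell}$ for $\ell\leq j$ and equals $0$ for $\ell>j$. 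The coefficients in Definition~\ref{def:Vj} are chosen precisely so that this telescoping occurs.

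With this in hand, the identity above is immediate for $\ell\leq j$: the two sides differ by the integer $2p^{j-\ell}$, which pulls out of the ceiling, and no hypothesis on $n$ is needed. For $\ell\geq j+1$ we have $\dim V_j^{\Cp{\ell}}=0$, so the content is that $n$ and $n+2p^j$ have the same ceiling after division by $p^\ell$. Writing $n=qp^\ell+r_\ell$ with $0\leq r_\ell<p^\ell$, and using $2p^j<p^\ell$ (this is where $p$ is odd), one sees this holds exactly when $0<r_\ell\leq p^\ell-2p^j$. The lower bound holds because $p^\ell\mid n$ would force $p^{j+1}\mid n$, contrary to $m\geq1$; and since $r_\ell\equiv n\equiv m\pmod{p^{j+1}}$ with $0\leq m<p^{j+1}$, the largest admissible value of $r_\ell$ is $p^\ell-p^{j+1}+m$, which is at most $p^\ell-2p^j$ precisely because $m\leq p^{j+1}-2p^j$. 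Having checked the identity on every orbit, Theorem~\ref{thm:Smashing} finishes the argument.

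I expect the only real subtlety to be this last step — the ceiling bookkeeping for $\ell\geq j+1$ — and in particular recognizing that both ends of the allowed range $1\leq m\leq p^{j+1}-2p^j$ are genuinely needed ($m\geq1$ so that $n$ does not sit on a multiple of $p^\ell$, and $m\leq p^{j+1}-2p^j$ so that $n+2p^j$ does not cross the next multiple), together with the observation that the single constraint at $\ell=j+1$ automatically propagates to all larger $\ell$ since $p^{j+1}\mid p^\ell$. The representation theory and the telescoping sum are routine once Definition~\ref{def:Vj} is unwound.
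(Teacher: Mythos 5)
Your proposal is correct and follows essentially the same route as the paper's proof: reduce to the numerical identity via Theorem~\ref{thm:Smashing}, compute $\dim V_j^{C_{p^\ell}}$ by telescoping the coefficients in Definition~\ref{def:Vj}, dispose of $\ell\leq j$ by pulling the integer $2p^{j-\ell}$ out of the ceiling, and do the remainder bookkeeping for $\ell>j$ (the paper splits off the remainder of $\ell p^{j+1}$ modulo $p^d$ rather than of $n$ itself, but the arithmetic is the same). Your explicit identification of where both endpoints of the range $1\leq m\leq p^{j+1}-2p^j$ are used is a nice touch and matches the roles they play in the paper's inequalities.
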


\begin{proof}
Let $n = m + \ell p^{j+1}$ where $1 \leq m \leq p^{j+1}-2p^j$ and
$\ell \geq 0$. By \autoref{thm:Smashing} we need to show that 
\[
\dim\left(V_j^{C_{p^d}}\right) + \left\lceil \frac{m+\ell
    p^{j+1}}{p^d} \right\rceil = \left\lceil \frac{m +\ell p^{j+1} +
    2p^j}{p^d} \right\rceil
\]
for all $0 \leq j \leq k-1$ and $0 \leq d \leq k$. 
From Definition~\ref{def:Vj} we can determine that
\[
\dim\left(V_j^{C_{p^d}}\right) = \begin{cases} 2p^{j-d} & d \leq j\\
0 & d > j \end{cases}
\]
Thus, when $d \leq j$, the result is immediate so we need only
consider $d > j$ or equivalently $d \geq j+1$. 

First, we may write $\ell p^{j+1} = qp^d + r$ where $q\geq 0$ and
$0\leq r \leq p^d - p^{j+1}$ since $p^{j+1}|r$. Then, 
\[
\left\lceil \frac{m+\ell
    p^{j+1}}{p^d} \right\rceil = \left\lceil \frac{m+qp^d + r}{p^d}
\right\rceil= \left\lceil \frac{m+r}{p^d} \right\rceil + q
\]
and
\[
\left\lceil \frac{m+\ell
    p^{j+1}+2p^j}{p^d} \right\rceil =\left\lceil \frac{m+r + 2p^j}{p^d} \right\rceil + q.
\]
So now we must show
\[
\left\lceil \frac{m+r}{p^d} \right\rceil = \left\lceil \frac{m+r +
    2p^j}{p^d} \right\rceil .
\]

Since $m \leq p^{j+1} - 2p^j$ and $r \leq p^d - p^{j+1}$ we have
\[
m + r \leq m +r + 2p^j \leq p^{j+1} + p^d - p^{j+1} = p^d.
\]
 Then since $m, r \geq 0$, we have 
\[
\left\lceil \frac{m+r}{p^d} \right\rceil = 1 = \left\lceil \frac{m+r +
    2p^j}{p^d} \right\rceil 
\]
which completes the proof.
\end{proof}

\begin{corollary}
The equivalences given in \autoref{thm:CyclicpGroupEquivalences} along
with the equivalences induced by smashing with $S^{\rho_{p^k}}$ sort
the categories $\tau_{\geq n}$ into $2^k$ equivalence classes defined
by 
\[
n =\sum_{i=0}^{k-1} a_i p^i \mbox{ where } a_i = \begin{cases} 1,2 & i = 0\\
  0,1 &1 \leq i \leq k-1 \end{cases}
\]

\end{corollary}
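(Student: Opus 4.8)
The plan is to reduce the statement to elementary arithmetic on $\mathbb Z$. By \autoref{thm:CyclicpGroupEquivalences} and the equivalence $\Sigma^{\rho_{p^k}}\colon\tau_{\geq n}\xrightarrow{\cong}\tau_{\geq n+p^k}$ (valid for all $n$), the moves joining two slice categories are exactly $n\mapsto n\pm 2p^j$ for $0\le j\le k-1$ (the $+$ direction requiring $n\bmod p^{j+1}\in\{1,\dots,(p-2)p^j\}$, the $-$ direction requiring the analogous condition on $n-2p^j$) together with $n\mapsto n\pm p^k$ with no restriction. Write $n\sim n'$ when $\tau_{\geq n}$ and $\tau_{\geq n'}$ are joined by a finite zigzag of these. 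Since $\Sigma^{\rho_{p^k}}$ already lets us reduce any $n$ to the range $[0,p^k)$, it suffices to prove: (i) every $n$ is $\sim$ to one of the $2^k$ integers $\sum_{i=0}^{k-1}a_ip^i$ with $a_0\in\{1,2\}$ and $a_i\in\{0,1\}$; and (ii) these $2^k$ integers are pairwise $\not\sim$.

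For (ii) I would exhibit an explicit $\sim$-invariant. For $0\le d\le k-1$ set $I(n)_d\equiv\lceil n/p^d\rceil+\lceil n/p^{d+1}\rceil\pmod 2$ and $I(n)=(I(n)_0,\dots,I(n)_{k-1})\in(\mathbb Z/2)^k$. The key observation is that the residue window of \autoref{thm:CyclicpGroupEquivalences} is precisely the condition guaranteeing that $\lceil n/p^{j+1}\rceil$ is unchanged by $n\mapsto n+2p^j$; by the nesting identity $\lceil\lceil m/p^{j+1}\rceil/p^{e-j-1}\rceil=\lceil m/p^e\rceil$, every $\lceil n/p^e\rceil$ with $e\ge j+1$ is then unchanged, while $\lceil n/p^d\rceil$ with $d\le j$ changes by the even number $2p^{j-d}$; hence $I$ is unchanged by each $\Sigma^{\pm V_j}$. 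Likewise, since $p$ is odd, $\lceil n/p^d\rceil$ changes by the odd number $p^{k-d}$ under $n\mapsto n+p^k$, so $I(n)_d$ shifts by the even total $p^{k-d}+p^{k-d-1}$ and $I$ is unchanged by $\Sigma^{\pm\rho_{p^k}}$. Thus $I$ is constant on $\sim$-classes. A direct computation using $\lceil n'/p^k\rceil=1$ for $0<n'<p^k$ and $p$ odd shows that on the prospective representatives $I$ takes $\sum a_ip^i$ to $(a_0-1,a_1,\dots,a_{k-1})$, a bijection onto $(\mathbb Z/2)^k$; in particular the $2^k$ representatives lie in $2^k$ distinct classes.

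For (i) I would normalize the base-$p$ expansion $n=\sum d_ip^i$ from the bottom up. First apply $\Sigma^{-\rho_{p^k}}$ repeatedly to reach $n<p^k$. Then, while $n$ is not already of the stated form, let $j$ be the least index with $d_j$ out of range. The windows give: if $j\ge 1$ then necessarily $d_j\ge 2$, and $\Sigma^{-V_j}$ applies because its window constrains only $d_0,\dots,d_j$ and the digits $d_0,\dots,d_{j-1}$ are already normalized with $d_0\ge 1$; this replaces $n$ by $n-2p^j$, dropping $d_j$ by $2$ and leaving all lower digits fixed. If $j=0$ and $d_0\ge 3$, then $\Sigma^{-V_0}$ applies and drops $d_0$ by $2$. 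If $j=0$ with $d_0=0$ (first applying $\Sigma^{\rho_{p^k}}$ followed by $\Sigma^{-V_{k-1}}$ when $n=0$, to make $n$ positive), one application of $\Sigma^{-V_0}$ turns $d_0$ into $p-2$, reducing to the previous case. Because $p$ is odd, iterating drives each $d_j$ into $\{0,1\}$ and $d_0$ into $\{1,2\}$; and once $d_0\ge 1$ the value $n$ strictly decreases at every subsequent move while staying positive, so the process terminates at a representative of the stated form. Combining (i) and (ii) yields exactly $2^k$ classes with the listed representatives.

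I expect the exhaustion step (i) to be the main obstacle: although each individual move is elementary, one must verify that the needed $\Sigma^{V_j}$ always lies within its residue window, that applying it never perturbs an already-fixed lower digit (a carry/borrow bookkeeping point), and that the procedure terminates — both the termination and the ability to land in $\{1,2\}$ rather than oscillating rely essentially on $p$ being odd. The invariant in (ii), by contrast, is more or less forced once one notices that the windows of \autoref{thm:CyclicpGroupEquivalences} are calibrated exactly so as to preserve the higher ceilings $\lceil n/p^e\rceil$.
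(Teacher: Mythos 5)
The paper states this corollary with no proof at all, so there is no argument of the authors' to compare against; judged on its own terms, your proposal is correct and supplies exactly the missing justification. Your two halves are the right ones, and I checked the points that carry the real content. For the separating invariant: the window $1\le (n\bmod p^{j+1})\le (p-2)p^j$ does force $\lceil n/p^{j+1}\rceil$, and hence by the nested-ceiling identity every $\lceil n/p^{e}\rceil$ with $e\ge j+1$, to be unchanged under $n\mapsto n+2p^j$, while the lower ceilings shift by the even amount $2p^{j-d}$, and oddness of $p$ handles the $\rho_{p^k}$-move; the evaluation $I\bigl(\sum a_ip^i\bigr)=(a_0-1,a_1,\dots,a_{k-1})$ is right because $\lceil n/p^d\rceil\equiv 1+\sum_{i\ge d}a_i\pmod 2$ for $d\ge 1$ once $a_0\ge 1$, so the $2^k$ representatives are indeed pairwise inequivalent. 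For the normal-form algorithm: the window check for $\Sigma^{-V_j}$ with normalized lower digits goes through because $d_0+d_1p+\cdots+d_{j-1}p^{j-1}<p^j$ and $d_j-2\le p-3$, so $(n-2p^j)\bmod p^{j+1}$ lands in $[1,(p-2)p^j]$; the borrow when $d_0=0$ and the special case $n=0$ (detour through $p^k$ and back down by $2p^{k-1}$) are the only delicate spots, and you handle both, with termination guaranteed once $d_0\ge 1$ since every subsequent move strictly decreases $n$. One remark worth adding if this were written up: your invariant is the "forced" one, since by \autoref{thm:Smashing} any equivalence $\tau_{\geq n}\simeq\tau_{\geq n'}$ realized by these spheres requires $\mnu_{n'}-\mnu_n$ to lie in the lattice spanned by $\m{\dim}_{V_0},\dots,\m{\dim}_{V_{k-1}},\m{\dim}_{\rho}$, and $I$ is exactly the mod-$2$ reduction of the quotient by that lattice.
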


We specialize now to the case of $C_p$, where we can describe more explicitly the slices for any spectrum.

\begin{theorem}\label{thm:CpSliceCategories}
For $G=C_p$, smashing with $S^{\lambda}$ induces equivalences
\[
\Sigma^{\lambda}\colon\tau_{\geq (2j-1)}\xrightarrow{\cong} \tau_{\geq (2j+1)}
\]
for $1\leq 2j-1\leq p-2$ and
\[
\Sigma^{\lambda}\colon\tau_{\geq 2j}\xrightarrow{\cong}\tau_{\geq 2j+2}
\]
for $2\leq 2j\leq p-3$.
\end{theorem}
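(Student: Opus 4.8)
The plan is to deduce Theorem~\ref{thm:CpSliceCategories} directly from Theorem~\ref{thm:Smashing} applied to the one-dimensional representation $\lambda = \lambda(1)$ of $G = C_p$. Since $\lambda$ is a nontrivial irreducible real representation, its fixed points are $\lambda^{C_p} = 0$ and $\lambda^{e} = \mathbb R^2$, so $\m{\dim}_\lambda(C_p/C_p) = 0$ and $\m{\dim}_\lambda(C_p/e) = 2$. By Theorem~\ref{thm:Smashing}, smashing with $S^\lambda$ gives an equivalence $\tau_{\geq n} \xrightarrow{\cong} \tau_{\geq n+k}$ precisely when $\m{\dim}_\lambda + \mnu_n = \mnu_{n+k}$ as functions on $\pi_0\cOrb_{C_p}$, i.e.\ when both
\[
\left\lceil \tfrac{n}{p} \right\rceil = \left\lceil \tfrac{n+k}{p} \right\rceil
\qquad\text{and}\qquad
2 + \left\lceil \tfrac{n}{1} \right\rceil = \left\lceil \tfrac{n+k}{1}\right\rceil
\]
hold. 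The second equation forces $k = 2$, so the only candidate suspension change is the one asserted. It then remains to identify exactly which $n$ satisfy the first equation with $k=2$, namely $\lceil n/p\rceil = \lceil (n+2)/p\rceil$.

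First I would analyze this ceiling condition. Writing $n = qp + s$ with $0 \leq s \leq p-1$, we have $\lceil n/p \rceil = \lceil (n+2)/p\rceil$ if and only if $n$ and $n+2$ lie in the same "block" $(qp, (q+1)p]$, equivalently $s \notin \{0, p-1\}$ when $p > 2$ — more carefully, one checks the condition holds exactly when $1 \leq s \leq p-2$, i.e.\ $n \not\equiv 0, -1 \pmod p$. (The edge cases: $s = 0$ gives $\lceil n/p\rceil = q$ but $\lceil(n+2)/p\rceil = q+1$; $s = p-1$ gives $q+1$ versus $q+2$; and $1 \leq s \leq p-2$ gives $q+1$ on both sides since $n+2 \leq qp + p$.) So the equivalence $\Sigma^\lambda\colon \tau_{\geq n} \xrightarrow{\cong}\tau_{\geq n+2}$ holds for all $n \not\equiv 0, -1 \pmod p$.

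Now I would just translate this to the two families in the statement. Among the odd residues, $n = 2j-1$ avoids $0, -1 \pmod p$ for $p$ odd exactly in the stated range $1 \leq 2j - 1 \leq p-2$ (the excluded odd value $2j-1 \equiv -1$, i.e.\ $2j-1 = p-2 \cdot(\ldots)$, wait: since $p$ is odd, $p-1$ is even so no odd number equals $-1 \pmod p$ in $[1,p-2]$... I should double-check the boundary — the point is that the listed ranges are precisely the $n$ in $[1, p-1]$ avoiding the bad residues, and the general case follows by combining with the $\Sigma^{\rho_G}$-periodicity of Corollary~\ref{cor:SliceEquivalences} / \cite[Corollary 4.24]{HHR}). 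Similarly the even values $n = 2j$ with $2 \leq 2j \leq p-3$ are exactly the even integers in $[1,p-1]$ avoiding $0$ and $p-1$ (note $p-1$ is even, so $2j = p-1$ is the excluded even endpoint, hence the range stops at $p-3$). So both displayed equivalences are instances of the general ceiling computation.

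The main obstacle is purely bookkeeping: getting the endpoints of the two congruence ranges exactly right, keeping careful track of parity (since $p$ is odd, $p-1$ is even, which is why the odd family runs up to $p-2$ and the even family only to $p-3$), and confirming that every $n \not\equiv 0,-1\pmod p$ is covered by one of the two families after using the $2p$-periodicity coming from $S^{\rho_{C_p}}$. None of this requires any homotopy-theoretic input beyond Theorem~\ref{thm:Smashing}; the entire content is the elementary inequality $m + r \leq p^d$-style estimate already used in the proof of Theorem~\ref{thm:CyclicpGroupEquivalences}, specialized to $k = 1$.
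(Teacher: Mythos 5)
Your proposal is correct and follows essentially the same route as the paper: the paper obtains this statement as the $k=1$, $j=0$ case of Theorem~\ref{thm:CyclicpGroupEquivalences} (where $V_0=\lambda$), whose proof is exactly the application of Theorem~\ref{thm:Smashing} together with the ceiling computation $\lceil n/p\rceil=\lceil (n+2)/p\rceil$ for $n\equiv 1,\dots,p-2 \pmod p$ that you carry out. Your residue analysis and the identification of the two stated ranges as the odd and even integers in $[1,p-2]$ are both accurate.
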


This lets us rewrite any slice in terms of basic operations.

\begin{corollary}\label{cor:CpSlicesOne}
For a $C_p$-spectrum $E$, we have equivalences for any $m\in\mathbb Z$
\begin{align*}
& P_{mp}^{mp}(E) & \simeq & \Sigma^{m\rho} H\m{\pi}_{m\rho}(E) & \\
& P_{mp+2k+1}^{mp+2k+1}(E) & \simeq & \Sigma^{m\rho+k\lambda+1} H\mathcal P^0 \m{\pi}_{m\rho+k\lambda+1}(E) & 0\leq k\leq \tfrac{p-3}{2} \\
& P_{mp+2k+2}^{mp+2k+2}(E) & \simeq & \Sigma^{m\rho+k\lambda} P_2^2\big(\Sigma^{-m\rho-k\lambda}(E)\big) & 0\leq k\leq \tfrac{p-3}{2},
\end{align*}
where $\mathcal P^0$ is the functor that takes a Mackey functor to its largest quotient in which the restriction map is an injection.
\end{corollary}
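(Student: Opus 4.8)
The plan is to reduce the computation of an arbitrary slice $P^n_n$ to one of the three ``building-block'' functors $P^0_0,P^1_1,P^2_2$ by transporting along the two families of suspension equivalences now available, and then to quote the known formulas for the first two. First I would peel off the $\rho$-part: the proposition \cite[Corollary 4.25]{HHR} gives $P^{n+p}_{n+p}(\Sigma^{\rho}F)\simeq\Sigma^{\rho}P^n_n(F)$ for every $C_p$-spectrum $F$ and every $n\in\Z$, and since $\Sigma^{\rho}$ is invertible this iterates to $P^{mp+r}_{mp+r}(E)\simeq\Sigma^{m\rho}P^r_r(\Sigma^{-m\rho}E)$ for all $m\in\Z$. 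The residues $r\in\{0\}\cup\{\,2k+1:0\le k\le\tfrac{p-3}{2}\,\}\cup\{\,2k+2:0\le k\le\tfrac{p-3}{2}\,\}$ exhaust $\{0,1,\dots,p-1\}$, so it is enough to handle $r=0$, $r=2k+1$, and $r=2k+2$.

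Next I would record a slice-level version of \autoref{thm:CpSliceCategories}: if $V$ is a virtual representation of $G$ such that $\Sigma^V$ restricts to equivalences $\tau_{\ge a}\xrightarrow{\cong}\tau_{\ge b}$ and $\tau_{\ge a+1}\xrightarrow{\cong}\tau_{\ge b+1}$, then $\Sigma^V P^a_a(F)\simeq P^b_b(\Sigma^V F)$. This is formal: $\Sigma^V$ is an auto-equivalence of $\Sp^G$ carrying the acyclics $\tau_{\ge a}$ to the acyclics $\tau_{\ge b}$, hence it intertwines the nullifications, $\Sigma^V P^{a-1}\Sigma^{-V}\simeq P^{b-1}$, and therefore also $\Sigma^V P_{\ge a}\Sigma^{-V}\simeq P_{\ge b}$ because $P_{\ge a}=\hofib(\mathrm{id}\to P^{a-1})$; the same remark with $(a+1,b+1)$ handles the outer nullification in $P^a_a=P^aP_{\ge a}$. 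Feeding in \autoref{thm:CpSliceCategories} and iterating, these hypotheses hold for $V=k\lambda$ with $(a,b)=(1,2k+1)$ and with $(a,b)=(2,2k+2)$ precisely when $0\le k\le\tfrac{p-3}{2}$ --- this is the bound that keeps every intermediate source index inside the range $[1,p-2]$ on which \autoref{thm:CpSliceCategories} applies. I would then conclude $P^{2k+1}_{2k+1}(F)\simeq\Sigma^{k\lambda}P^1_1(\Sigma^{-k\lambda}F)$ and $P^{2k+2}_{2k+2}(F)\simeq\Sigma^{k\lambda}P^2_2(\Sigma^{-k\lambda}F)$.

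Finally I would assemble the pieces using $P^0_0(X)\simeq H\m{\pi}_0(X)$ and $P^1_1(X)\simeq\Sigma H\mathcal P^0\m{\pi}_1(X)$ from \cite[Proposition 4.50]{HHR} (with $\mathcal P^0$ read as the largest restriction-injective quotient), together with the $RO(G)$-graded bookkeeping $\m{\pi}_W(\Sigma^{-U}X)\cong\m{\pi}_{W+U}(X)$: the three reductions give $P^{mp}_{mp}(E)\simeq\Sigma^{m\rho}H\m{\pi}_{m\rho}(E)$, then $P^{mp+2k+1}_{mp+2k+1}(E)\simeq\Sigma^{m\rho+k\lambda+1}H\mathcal P^0\m{\pi}_{m\rho+k\lambda+1}(E)$, and $P^{mp+2k+2}_{mp+2k+2}(E)\simeq\Sigma^{m\rho+k\lambda}P^2_2(\Sigma^{-m\rho-k\lambda}E)$ with nothing more to identify in the last case. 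The main obstacle is the second step: one has to make sure the \emph{auxiliary} equivalence on $\tau_{\ge a+1}$ --- not merely the one on $\tau_{\ge a}$ --- is in force before commuting $\Sigma^{k\lambda}$ past the slice, and this is exactly why the corollary is stated only for $0\le k\le\tfrac{p-3}{2}$; everything else is routine manipulation of the slice tower.
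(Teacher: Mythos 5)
Your proposal is correct and follows essentially the same route as the paper: reduce to $m=0$ via the $\Sigma^{\rho}$-equivalence of slices, then transport the $0$-, $1$-, and $2$-slices along the $\Sigma^{k\lambda}$ equivalences of Theorem~\ref{thm:CpSliceCategories}. The only difference is that you spell out the formal lemma (needing the equivalences at both levels $a$ and $a+1$ to commute $\Sigma^{V}$ past $P^a_a=P^aP_{\geq a}$) that the paper asserts without proof, and your bookkeeping of why this forces $0\le k\le\tfrac{p-3}{2}$ is accurate.
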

\begin{proof}
Smashing with the regular representation always moves between the appropriate slice categories, hence it suffices to consider the case $m=0$. In this case, Theorem~\ref{thm:CpSliceCategories} shows that smashing with $\Sigma^{k\lambda}$ induces an equivalence between $1$-slices and $(2k+1)$-slices and between $2$-slices and $(2k+2)$-slices for $0\leq 2k\leq p-3$. The result follows from the determination of the $0$- and $1$-slices.
\end{proof}

Work of Ullman allows us to complete the characterization of $C_p$-slices. UIlman describes the $(-1)$-slice for any finite group $G$, and we recall the results here.

\begin{definition}
If $\mM$ is a $G$-Mackey functor, then let $EG\otimes \mM$ be the sub-Mackey functor generated by $\mM(G/e)$.
\end{definition}
\begin{remark}
The functor $EG\otimes \mM$ is so named because it is $\m{\pi}_0$ of the spectrum $EG_+\wedge H\mM$.
\end{remark}

\begin{theorem}[{\cite[Corollary 8.9, Section I.8]{UllmanThesis}}]\label{thm:UllmansPiMinusOne}
For any finite group $G$ and for any $G$-spectrum $E$, we have
\[
P_{-1}^{-1} E\simeq\Sigma^{-1} H \big(EG\otimes \m{\pi}_{-1}E\big).
\]
\end{theorem}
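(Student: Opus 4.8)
The statement to prove is Theorem~\ref{thm:UllmansPiMinusOne}, which describes the $(-1)$-slice of any $G$-spectrum $E$ as $\Sigma^{-1}H(EG\otimes\m{\pi}_{-1}E)$.

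The plan is to reduce to a computation about $P_{\geq -1}E$ and its homotopy Mackey functors. First I would note that since $P^{-1}_{-1}E = P^{-1}P_{\geq -1}(E)$, and since $P_{\geq -1}$ is slice $(-1)$-connective, it suffices to understand $P_{\geq -1}E$ well enough to nullify the part in $\tau_{\geq 0}$. The key structural input is that a spectrum which is slice $\geq -1$ but for which we want to extract the top piece (the $(-1)$-slice) should look like a suspension of an Eilenberg--Mac~Lane spectrum. So the first step is to identify $\m{\pi}_{-1}(P_{\geq -1}E)$ with $\m{\pi}_{-1}(E)$ — this follows because the fiber $P_{\geq -1}E \to E \to P^{-2}(E)$ has $P^{-2}(E)$ slice $(-2)$-coconnective, and one checks using the defining vanishing condition $[S^{k\rho_H + r}, P^{-2}E]^H = 0$ for $k|H| \geq -2$, $r \geq 0$ that this forces $\pi^H_{-1}P^{-2}E = 0$ for all $H$, hence $\m{\pi}_{-1}P_{\geq -1}E \xrightarrow{\cong}\m{\pi}_{-1}E$.

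Next I would pin down the full homotopy type of the $(-1)$-slice. Since $P^{-1}_{-1}E$ is simultaneously slice $\geq -1$ and slice $\leq -1$, I would argue it is concentrated in a single ``dimension'' in the appropriate sense: being slice $\leq -1$ kills everything above, and being slice $\geq -1$ — via our new characterization Theorem~\ref{thm:GeomFPVersion} or the connectivity version Theorem~\ref{thm:Slices} applied after suitable shifting — controls everything below. Concretely, I would smash up by $S^1$ (shifting to the $0$-slice world is not quite available since $0$ is a different congruence class, so instead work directly): the cleanest route is to show $\Sigma P^{-1}_{-1}E$ is an Eilenberg--Mac~Lane spectrum $H\mM$ for some Mackey functor $\mM$, by checking its homotopy Mackey functors vanish outside degree $0$. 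The vanishing in positive degrees is the slice $\leq -1$ condition (after the shift, translated into ordinary homotopy-group vanishing); the vanishing in negative degrees uses that $P_{\geq -1}E$, hence its top slice, has all geometric fixed points appropriately connected via Theorem~\ref{thm:GeomFPVersion} — specifically $\Phi^H(P^{-1}_{-1}E)$ lies in the localizing subcategory generated by $S^{\lceil -1/|H|\rceil}$, and $\lceil -1/|H|\rceil = -1$ for $H \neq e$ while it is $-1$ for $H = e$ too, so after the $\Sigma$ these are $(-1)$-connected, actually $0$-connected for proper $H$ — this discrepancy in connectivity across subgroups is exactly what forces $\mM = EG\otimes(\text{something})$, i.e.\ a Mackey functor whose transfers from proper subgroups into $G/G$ are trivial above the underlying level.

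Finally I would identify $\mM$. We have $\m{\pi}_{-1}P^{-1}_{-1}E \cong \m{\pi}_{-1}E$ at least at the underlying level $G/e$, since geometric and honest fixed points agree there and the lower slices contribute nothing underneath. The structural constraint from the previous paragraph — that $\Phi^H\Sigma P^{-1}_{-1}E$ is $0$-connected for all $H \neq e$ — translates into the statement that $\mM$ is generated as a Mackey functor by $\mM(G/e)$, which is precisely the definition of $EG\otimes \m{\pi}_{-1}E$ once we know $\mM(G/e) = \m{\pi}_{-1}E(G/e)$. Thus $\Sigma P^{-1}_{-1}E \simeq H(EG\otimes\m{\pi}_{-1}E)$, giving the claimed formula after desuspending. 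The main obstacle I anticipate is the middle step: cleanly proving that a spectrum which is both slice $\geq -1$ and slice $\leq -1$ must be a single shifted Eilenberg--Mac~Lane spectrum, and extracting from the subgroup-dependent connectivity bounds of Theorem~\ref{thm:GeomFPVersion} the precise algebraic condition ``generated by the underlying'' — this requires carefully matching the localizing-subcategory description of geometric fixed points with the transfer structure of the resulting Mackey functor, and handling the isotropy separation sequence for $P^{-1}_{-1}E$ itself rather than for $E$.
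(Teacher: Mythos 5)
The paper offers no argument for this statement: it is quoted verbatim from Ullman's thesis (Corollary 8.9 there), so your proposal can only be judged on its own terms. Judged that way, its first step is false, and false in a way that is incompatible with the theorem. You claim $\m{\pi}_{-1}P_{\geq -1}E\cong\m{\pi}_{-1}E$ because $P^{-2}E$ has vanishing $\pi^H_{-1}$ for all $H$. But $P^{-2}E$ is slice less than $-1$, so the defining vanishing is $[S^{k\rho_H+r},P^{-2}E]^H=0$ for $k|H|\geq -1$, $r\geq 0$ (not $k|H|\geq-2$); for nontrivial $H$ this forces $k\geq 0$, so the test spheres never reach $S^{-1}$ and $\pi^H_{-1}P^{-2}E$ is not controlled. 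Indeed it cannot vanish in general: if $\mM$ is a Mackey functor with $\mM(G/e)=0$, then $E=\Sigma^{-1}H\mM$ is already slice less than $-1$, so $P^{-2}E=E$ has $\pi^G_{-1}=\mM(G/G)\neq 0$ while $P_{\geq-1}E=\ast$ — consistent with the theorem, since $EG\otimes\mM=0$. The long exact sequence only gives that $\m{\pi}_{-1}P_{\geq-1}E\to\m{\pi}_{-1}E$ is injective (using $\m{\pi}_0P^{-2}E=0$); identifying the image as the subfunctor $EG\otimes\m{\pi}_{-1}E$ rather than all of $\m{\pi}_{-1}E$ is precisely the content of the theorem. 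Had your step 1 been correct, your argument would output $P^{-1}_{-1}E\simeq\Sigma^{-1}H\m{\pi}_{-1}E$, which is the wrong answer.

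The rest of your outline is essentially sound and can be assembled into a correct proof once step 1 is discarded: $\Sigma P^{-1}_{-1}E$ is Eilenberg--Mac~Lane (vanishing in degrees $\geq 1$ from slice $\leq-1$ with $k=0$; in degrees $\leq -1$ because every generator of $\tau_{\geq -1}$ is built from cells of dimension $\geq -1$). Theorem~\ref{thm:GeomFPVersion} with $n=-1$ gives that $\Phi^H P^{-1}_{-1}E$ lies in $\tau^{Post}_{\geq 0}$ for $H\neq e$ — note $\lceil -1/|H|\rceil=0$ here, not $-1$ as you wrote — so the Brauer quotients $\pi_0\Phi^H$ of $\mM=\m{\pi}_{-1}P^{-1}_{-1}E$ vanish for all $H\neq e$, forcing $\mM$ to be generated by $\mM(G/e)$; and the injection $\mM\hookrightarrow\m{\pi}_{-1}E$ is an isomorphism at level $G/e$ because $\pi^e_{-1}P^{-2}E=0$ does hold (take $H=e$, $k=-1$, $r=0$). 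Together these identify $\mM$ with $EG\otimes\m{\pi}_{-1}E$. So the mechanism you locate in the last two paragraphs is the right one, but the proof must be restructured so that it produces, rather than presupposes, the answer at $\m{\pi}_{-1}$.
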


This gives a simple formula for the $2$-slices for $C_p$.

\begin{corollary}
For any $C_p$-spectrum $E$, we have 
\[
P_2^2 E\simeq \Sigma^{\lambda} H\big(EC_p\otimes \m{\pi}_{\lambda} E\big).
\]
\end{corollary}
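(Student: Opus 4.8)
The plan is to realize $P_2^2 E$ as a representation-sphere suspension of Ullman's $(-1)$-slice (Theorem~\ref{thm:UllmansPiMinusOne}). The indices $-1$ and $2$ differ by $3$ on the free orbit $C_p/e$ and by $1$ on $C_p/C_p$, and this is precisely the pattern of fixed-point dimensions of $V=1+\lambda$, for which $\m{\dim}_V(C_p/e)=3$ and $\m{\dim}_V(C_p/C_p)=1$. So I would first show that $\Sigma^{1+\lambda}$ carries the $(-1)$-slice functor to the $2$-slice functor, and then substitute Ullman's formula.

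For the first step, I would check that smashing with $S^{1+\lambda}$ induces equivalences $\Sigma^{1+\lambda}\colon\tau_{\geq -1}\xrightarrow{\cong}\tau_{\geq 2}$ and $\Sigma^{1+\lambda}\colon\tau_{\geq 0}\xrightarrow{\cong}\tau_{\geq 3}$. Since the criterion of Theorem~\ref{thm:GeomFPVersion}(2), and the Lemmas~\ref{lem:SlicesEtildeP} and~\ref{lem:SlicesEPplus} it rests on, are valid for every $n\in\mathbb Z$, the argument of Theorem~\ref{thm:Smashing} applies unchanged at negative indices once $\mnu_n$ is extended to all integers by the same ceiling formula; the identities $\m{\dim}_{1+\lambda}+\mnu_{-1}=\mnu_{2}$ and $\m{\dim}_{1+\lambda}+\mnu_{0}=\mnu_{3}$ then hold on both orbits of $C_p$, the only checks of substance being $\lceil 2/p\rceil=1=\lceil 3/p\rceil$, which use $p\geq 3$. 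One can also avoid $\mnu$ at negative indices and argue directly from Theorem~\ref{thm:GeomFPVersion}(2): if $E\in\tau_{\geq -1}$ then $\Phi^e E\in\tau_{\geq -1}^{Post}$ and $\Phi^{C_p}E\in\tau_{\geq 0}^{Post}$, so $\Phi^e(S^{1+\lambda}\wedge E)=S^{3}\wedge\Phi^e E\in\tau_{\geq 2}^{Post}$ and $\Phi^{C_p}(S^{1+\lambda}\wedge E)=S^{1}\wedge\Phi^{C_p}E\in\tau_{\geq 1}^{Post}$, whence $S^{1+\lambda}\wedge E\in\tau_{\geq 2}$, with the reverse inclusion (via $S^{-1-\lambda}$) and the case of $\tau_{\geq 0}$ handled the same way.

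Next I would promote these category equivalences to a natural equivalence of slice functors
\[
P_2^2(-)\simeq\Sigma^{1+\lambda}\,P_{-1}^{-1}\bigl(\Sigma^{-(1+\lambda)}(-)\bigr),
\]
by the same reasoning as in the corollary following Corollary~\ref{cor:SliceEquivalences} and in the reduction used to prove Corollary~\ref{cor:CpSlicesOne}: conjugating a Bousfield nullification by the autoequivalence $\Sigma^{1+\lambda}$ of $\Sp^{C_p}$ yields the nullification at the image subcategory, so $\Sigma^{1+\lambda}$ sends $P^{-2}$ to $P^{1}$ and $P^{-1}$ to $P^{2}$, hence $P_{\geq -1}$ to $P_{\geq 2}$ and $P_{-1}^{-1}=P^{-1}\circ P_{\geq -1}$ to $P^{2}\circ P_{\geq 2}=P_2^2$. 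Finally I would insert Theorem~\ref{thm:UllmansPiMinusOne} applied to $\Sigma^{-(1+\lambda)}E$, together with the reindexing $\m{\pi}_{-1}\bigl(\Sigma^{-(1+\lambda)}E\bigr)\cong\m{\pi}_{-1+(1+\lambda)}(E)=\m{\pi}_{\lambda}(E)$, to obtain
\[
P_2^2 E\simeq\Sigma^{1+\lambda}\Sigma^{-1}H\bigl(EC_p\otimes\m{\pi}_{\lambda}E\bigr)=\Sigma^{\lambda}H\bigl(EC_p\otimes\m{\pi}_{\lambda}E\bigr).
\]

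The main obstacle is the middle step: one must be careful that conjugation by $\Sigma^{1+\lambda}$ genuinely matches the two nullification towers at the pairs of indices $(-1,0)$ and $(2,3)$, which is exactly why both equivalences of the first step are needed and not just one. The remaining ingredients are a direct appeal to Theorem~\ref{thm:GeomFPVersion} (legitimate at all integers) and an elementary shift of $RO(C_p)$-graded homotopy.
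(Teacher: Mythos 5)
Your argument is correct and follows essentially the same route as the paper: both proofs reduce to the conjugation identity $P_2^2(E)\simeq\Sigma^{1+\lambda}P_{-1}^{-1}\bigl(\Sigma^{-1-\lambda}E\bigr)$ and then apply Theorem~\ref{thm:UllmansPiMinusOne} together with the shift $\m{\pi}_{-1}(\Sigma^{-1-\lambda}E)\cong\m{\pi}_{\lambda}(E)$. The only difference is how that identity is obtained: the paper specializes Corollary~\ref{cor:CpSlicesOne} at $m=-1$, $k=(p-3)/2$ (using $\rho\sim 1+\tfrac{p-1}{2}\lambda$), whereas you verify the equivalences $\Sigma^{1+\lambda}\colon\tau_{\geq -1}\to\tau_{\geq 2}$ and $\tau_{\geq 0}\to\tau_{\geq 3}$ directly from Theorem~\ref{thm:GeomFPVersion}, which is a legitimate and arguably cleaner way to handle the negative-index case.
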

\begin{proof}
Corollary~\ref{cor:CpSlicesOne} with $m=-1$ and $k=(p-3)/2$ shows that
\[
P_{-1}^{-1} E\simeq \Sigma^{-\rho+(p-3)/2\lambda} P_2^2 \big(\Sigma^{\rho-(p-3)/2\lambda} E\big)
\]
or equivalently
\[
P_2^2 E\simeq \Sigma^{1+\lambda} P_{-1}^{-1}\big(\Sigma^{-1-\lambda} E\big).
\]
Theorem~\ref{thm:UllmansPiMinusOne} then gives the result.
\end{proof}

Summarizing, we have the following determination of all of the $C_p$-slices, completing the proof of Theorem~\ref{thm:CpSlices} from the introduction.

\begin{theorem}
Let $p$ be odd. For any $C_p$-spectrum $E$, we have equivalences for any $m\in\mathbb Z$
\begin{align*}
& P_{mp}^{mp}(E) & \simeq & \Sigma^{m\rho} H\m{\pi}_{m\rho}(E) & \\
& P_{mp+2k+1}^{mp+2k+1}(E) & \simeq & \Sigma^{m\rho+k\lambda+1} H\mathcal P^0 \m{\pi}_{m\rho+k\lambda+1}(E) & 0\leq k\leq \tfrac{p-3}{2} \\
& P_{mp+2k+2}^{mp+2k+2}(E) & \simeq & \Sigma^{m\rho+(k+1)\lambda} H \big(EC_p\otimes\m{\pi}_{m\rho+(k+1)\lambda}(E)\big) & 0\leq k\leq \tfrac{p-3}{2}.
\end{align*}
\end{theorem}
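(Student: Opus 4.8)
The plan is to assemble the statement from the pieces already proved. The first two displayed equivalences are verbatim the first two lines of Corollary~\ref{cor:CpSlicesOne}, so only the third line requires argument, and for that the strategy is to feed the explicit computation of the $2$-slice back into the reduction supplied by that corollary.

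First I would recall from Corollary~\ref{cor:CpSlicesOne} that for $0\le k\le\tfrac{p-3}{2}$ and any $m\in\mathbb Z$,
\[
P_{mp+2k+2}^{mp+2k+2}(E)\simeq \Sigma^{m\rho+k\lambda}\,P_2^2\big(\Sigma^{-m\rho-k\lambda}E\big),
\]
and recall from the corollary following Theorem~\ref{thm:UllmansPiMinusOne} that $P_2^2(F)\simeq\Sigma^{\lambda}H\big(EC_p\otimes\m{\pi}_{\lambda}F\big)$ for every $C_p$-spectrum $F$. Next I would apply the latter with $F=\Sigma^{-m\rho-k\lambda}E$, using the standard identification $\m{\pi}_{\lambda}\Sigma^{-m\rho-k\lambda}E\cong\m{\pi}_{m\rho+(k+1)\lambda}E$ of $RO(C_p)$-graded homotopy Mackey functors under suspension, together with the tautological fact that the functor $EC_p\otimes-$ respects this isomorphism. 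Combining the two displays and collecting the suspension coordinates via $\Sigma^{m\rho+k\lambda}\circ\Sigma^{\lambda}=\Sigma^{m\rho+(k+1)\lambda}$ then yields
\[
P_{mp+2k+2}^{mp+2k+2}(E)\simeq\Sigma^{m\rho+(k+1)\lambda}H\big(EC_p\otimes\m{\pi}_{m\rho+(k+1)\lambda}E\big),
\]
which is the asserted formula; the relabeling $k\mapsto k+1$ in the subscript of $\m{\pi}$ is purely cosmetic and leaves the range $0\le k\le\tfrac{p-3}{2}$ as stated.

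Since every input is already in hand, there is no genuine obstacle here: the proof is bookkeeping. The only points meriting (entirely routine) care are the behavior of $RO(C_p)$-graded homotopy Mackey functors under representation suspension, and the observation that the three families of indices $mp$, $mp+2k+1$, and $mp+2k+2$ for $0\le k\le\tfrac{p-3}{2}$, together with $(m+1)p$, exhaust $\mathbb Z$, so that the theorem really does describe every slice of $E$.
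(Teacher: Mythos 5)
Your proposal is correct and is essentially the paper's own argument: the theorem is stated there as a summary obtained by feeding the $2$-slice formula $P_2^2(F)\simeq\Sigma^{\lambda}H\big(EC_p\otimes\m{\pi}_{\lambda}F\big)$ into the third line of Corollary~\ref{cor:CpSlicesOne}, exactly as you do. The suspension bookkeeping and the identification $\m{\pi}_{\lambda}\Sigma^{-m\rho-k\lambda}E\cong\m{\pi}_{m\rho+(k+1)\lambda}E$ are handled correctly.
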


\bibliographystyle{plain}

\bibliography{Slices}

\end{document}